\DeclareMathOperator{\ord}{ord}
\DeclareMathOperator{\gl}{GL}
\DeclareMathOperator{\mt}{MT}
\DeclareMathOperator{\jac}{Jac}
\DeclareMathOperator{\im}{Im}
\DeclareMathOperator{\mon}{Mon}
\DeclareMathOperator{\ima}{Im}
\DeclareMathOperator{\irr}{Irr}
\DeclareMathOperator{\PSU}{PSU}
\DeclareMathOperator{\gsp}{Gsp}
\DeclareMathOperator{\mult}{mult}
\DeclareMathOperator{\Sym}{Sym}
\DeclareMathOperator{\can}{can}
\DeclareMathOperator{\dR}{dR}
\DeclareMathOperator{\cris}{cris}
\DeclareMathOperator{\Fil}{Fil}
\theoremstyle{plain}
\newtheorem{thm}{Theorem}[section]
\newtheorem{theorem}[thm]{Theorem}
\newtheorem{lemma}[thm]{Lemma}
\newtheorem{proposition}[thm]{Proposition}
\theoremstyle{definition}
\newtheorem{remark}[thm]{Remark}
\newtheorem{definition}[thm]{Definition}
\numberwithin{equation}{thm}
\newcommand{\sC}{{\mathcal C}}
\newcommand{\sK}{{\mathcal K}}
\newcommand{\sL}{{\mathcal L}}
\newcommand{\sP}{{\mathcal P}}
\newcommand{\A}{{\mathbb A}}
\newcommand{\C}{{\mathbb C}}
\newcommand{\D}{{\mathbb D}}
\newcommand{\E}{{\mathbb E}}
\newcommand{\F}{{\mathbb F}}
\renewcommand{\H}{{\mathbb H}}
\renewcommand{\L}{{\mathbb L}}
\renewcommand{\P}{{\mathbb P}}
\newcommand{\Q}{{\mathbb Q}}
\newcommand{\R}{{\mathbb R}}
\newcommand{\Z}{{\mathbb Z}}
\newcommand{\Hom}{{\rm Hom}}
\newcommand{\Sp}{{\rm Sp}}
\begin{document}

\title{Families of Prym varieties of abelian coverings and Shimura varieties}

\author[A. Mohajer]{Abolfazl Mohajer}

\email{abmohajer83@gmail.com}

\subjclass[2010]{14H30, 14H40}

\keywords{Prym variety, Prym map, Galois covering}

\maketitle

\begin{abstract}
Under the condition that the Prym map is injective in characteristic $p$, we prove that 
the special subvarieties in the moduli space of abelian varieties of dimension $l$ and polarization type $D$, $A_{l,D}$, arising from families of abelian covers of $\P^1$ are of a very restrictive nature. In other words, if the family is one-dimensional or if it contains an eigenspace of certain type for the group action on the cohomology of fibers, then the Shimura varieties arising from such families can only be constructed by the group action of the family. 
\end{abstract}

\section{introduction} \label{introduction}
This paper continues the studies of several authors in \cite{CF2}, \cite{CFGP}, \cite{F}. and \cite{FG} concerning families of Prym varieties and the resulting moduli varieties in $A_{l,D}$, the moduli space of complex abelian varieties of dimension $l$ with polarization of type $D$. The present work is specially motivated by Frediani's paper \cite{F}. In particular, we prove that such a moduli variety is only very rarely a totally geodesic subvariety with respect to the Siegel metric.

Let $H$ be a finite group with $n=|H|$. Suppose $C$ is a compact Riemann surface of genus $g$. Let $t:=\{t_1,\dots, t_r\}$ be an $s$-tuple of distinct points in $C$. Set $U_t:=C\setminus \{t_1,\dots, t_r\}$. The fundamental group $\pi_1(U_t, t_0)$ has a presentation $\langle \alpha_1,\beta_1,\dots,\alpha_g,\beta_g, \gamma_1,\dots,\gamma_r\mid \prod_1^s\gamma_i \prod_1^g[\alpha_j,\beta_j]=1\rangle$. Here $\alpha_1,\beta_1,\dots,\alpha_g,\beta_g$ are simple loops in $U_t$ which only intersect in $t_0$, and their homology classes in $H_1(C,\Z)$ form a symplectic basis.\par
If $f:\widetilde{C}\to C$ is a  ramified $H$-Galois cover with branch locus $t$, set $V=f^{-1}(U_t)$. Then $f|_V:V\to U_t$ is an unramified Galois covering. Then there is an epimorphism $\theta:\pi_1(U_t, t_0)\to H$. Conversely, such an epimorphism determines a ramified Galois covering of $C$ with branch locus $t$.  The order $m_i$ of $\theta(\gamma_i)$ is called the \emph{local monodromy datum} of the branch point $t_i$. Let $m=(m_1,\dots, m_r)$. The collection $(m,H,\theta)$ is called a \emph{datum}. The Riemann-Hurwitz formula implies that the genus $\widetilde{g}$ of the curve $\widetilde{C}$ is equal to
\begin{equation}
2\widetilde{g}-2=|H|(2g-2+\sum_{i=1}^r (1-\frac{1}{m_i}))
\end{equation}

We introduce the stack $R(H,g,r)$: The Objects of $R(H,g,r)$ are couples $((C, x_1,\dots, x_r),f:\widetilde{C}\to C)$ such that

\begin{enumerate}
\item $(C, x_1,\dots, x_r)$ is a smooth projective $r$-pointed curve of genus $g$.
\item $f:\widetilde{C}\to C$ is a finite cover, $H$ acts on  $\widetilde{C}$ and $f$ is $H$-invariant.
\item the restriction $f^{gen}:f^{-1}(C\setminus\{x_1,\dots, x_r\})\to C\setminus\{x_1,\dots, x_r\}$ is an \'etale $H$-torsor.
\end{enumerate}
Sometimes, for simplicity, we denote an element $((C, x_1,\dots, x_r),f:\widetilde{C}\to C)$ of $R(H,g,r)$ just by $(C,f)$. Note that $r=0$ is also possible which amounts to say that the covers $\widetilde{C}\to C$ are unramified. Moreover since our problem is insensitive to level structures, we may actually consider $R(H,g,r)$ as a coarse moduli space. As a result, we omit any assumptions on the automorphism group of the base curve $C$ whose non-triviality can be remedied either by considering the moduli stack or by imposing level structures.\par To each element $((C, x_1,\dots, x_r),f:\widetilde{C}\to C)$, we associate a Prym variety $P(\widetilde{C}/C)$ and obtain a Prym map 
$\mathscr{P}:R(H,g,r)\to A_{l,D}$, where $A_{l,D}$ denotes the moduli space of abelian varieties of dimension $l$ and polarization type $D$. Then we consider the following situation: Let $\widetilde{C}\to T$ be a family such that each $\widetilde{C}_t$ is an abelian $\widetilde{G}$-cover of $\P^1$. Let $H\subset\widetilde{G}$ be a normal subgroup. Then the quotients $\widetilde{C}_t\to C_t$ give rise to a subvariety $Z\subset R(H,g,r)$. On the other hand, the Prym variety $P(\widetilde{C}_t/C_t)$ can be defined as mentioned in the above and we are interested in the Zariski closure of the image $\mathscr{P}(Z)$ as a subvariety of $A_{l,D}$. Inspired by the results of Frediani \cite{F} about special subvarieties arising from families of abelian covers on the one hand and the results of Naranjo and Ortega \cite{no}, which assert that the Prym map $\mathscr{P}(\Z_2,g,r)$ is an embedding if $g>0$ and $r>6$, on the other hand, we prove that if the reduction of the Prym map $\mathscr{P}$ to characteristic $p$ is injective, then the assumption that the above subvariety is special is very restrictive, i.e., they can only be of the form $P(\widetilde{G})$ which is a special subvariety constructed using the group action of the family, see Section ~\ref{Main results} for more precise statements. This is in analogy with the similar problem for the Torelli locus which has been treated for example in \cite{FGP}, \cite{MZ} and \cite{M10}.

\section{the prym locus and abelian covers} \label{prym and abelian}
Let us denote the Jacobians of the curves $\widetilde{C}$ and $C$ respectively by $\widetilde{J}$ and $J$. Note that by definition, if $R$ is a Riemann surface,
\[J(R)=\jac(R)=H^0(R,\omega_{R})^*/H_1(R,\mathbb{Z}).\]
Since the finite group $H$ acts on $\widetilde{C}$ it also acts on the space of differential 1-forms $H^0(\widetilde{C},\omega_{\widetilde{C}})$ and $H_1(\widetilde{C},\mathbb{Z})$ and hence on the Jacobian $\widetilde{J}$. In particular, we denote by $\widetilde{J}^{H}$ the subgroup of fixed points of $\widetilde{J}$ under the action of $H$. The following theorem is proven in \cite{RR} (repectively, Theorem 2.5 and Proposition 3.1).

\begin{theorem} \label{Subtorus-jacobian}
\begin{enumerate}
\item $f^*J=(\widetilde{J}^H)^0$.\item The map $f$ induces an isogeny $J\times P(\widetilde{C}/C)\sim \widetilde{J}$
\end{enumerate}
\end{theorem}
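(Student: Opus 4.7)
The strategy is to prove both parts by combining a dimension count for $H$-invariants with the standard norm map identity $\mathrm{Nm}_f \circ f^* = [|H|]_J$, where $\mathrm{Nm}_f : \widetilde{J} \to J$ is the norm map induced on divisor classes by the pushforward.

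For (1), the inclusion $f^*J \subseteq \widetilde{J}^H$ is automatic since any line bundle pulled back from $C$ is tautologically $H$-invariant. The map $f^*: J \to \widetilde{J}$ has finite kernel: if $f^*L$ is trivial then $L^{\otimes|H|} = \mathrm{Nm}_f(f^*L)$ is trivial, so $\ker f^* \subseteq J[|H|]$. Thus $\dim f^*J = g$, and to conclude equality with $(\widetilde{J}^H)^0$ it suffices to check $\dim \widetilde{J}^H = g$. I would obtain this by identifying
\[T_0 \widetilde{J}^H \;=\; \bigl(H^0(\widetilde{C},\omega_{\widetilde{C}})^H\bigr)^{\!*} \;=\; H^0\bigl(C,(f_*\omega_{\widetilde{C}})^H\bigr)^{\!*} \;=\; H^0(C,\omega_C)^*,\]
using the classical fact $(f_*\omega_{\widetilde{C}})^H = \omega_C$ for tame Galois covers in characteristic zero.

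For (2), I would work with the addition map
\[\mu: J \times P(\widetilde{C}/C) \longrightarrow \widetilde{J}, \qquad (x,y) \longmapsto f^*x + y,\]
where $P(\widetilde{C}/C) := (\ker \mathrm{Nm}_f)^0$. If $(x,y)\in \ker\mu$, then $y=-f^*x \in f^*J\cap P$, and applying $\mathrm{Nm}_f$ gives $|H|\cdot x = 0$; hence $\ker\mu \subseteq J[|H|]\times P[|H|]$ is finite. Surjectivity reduces to a dimension count: the identity $\mathrm{Nm}_f\circ f^* = [|H|]_J$ forces $\mathrm{Nm}_f$ to be surjective (since $[|H|]$ is), so from the exact sequence $0\to P(\widetilde{C}/C) \to \widetilde{J} \xrightarrow{\mathrm{Nm}_f} J \to 0$ (up to isogeny on the right) we read off $\dim P = \widetilde{g}-g$, giving $\dim(J\times P) = \widetilde{g} = \dim\widetilde{J}$, and $\mu$ is the desired isogeny.

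The main obstacle is the identification $(f_*\omega_{\widetilde{C}})^H = \omega_C$ underlying part (1); it is standard but requires tameness, so one implicitly works in characteristic zero (or characteristics prime to $|H|$). Once this is granted, part (1) follows by dimension comparison, and part (2) is then a formal consequence of the norm relation.
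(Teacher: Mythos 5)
Your argument is correct, but note that the paper does not actually prove this statement: it is quoted from Recillas--Rodr\'iguez \cite{RR} (their Theorem 2.5 and Proposition 3.1), so there is no internal proof to compare against. Your proposal supplies the standard self-contained argument, and it is sound: the norm identity $\mathrm{Nm}_f\circ f^*=[|H|]_J$ gives finiteness of $\ker f^*$ and surjectivity of $\mathrm{Nm}_f$; the identification of $T_0(\widetilde{J}^H)^0$ with $\bigl(H^0(\widetilde{C},\omega_{\widetilde{C}})^H\bigr)^*=H^0(C,\omega_C)^*$ closes part (1) by a dimension count; and part (2) follows because the addition map $(x,y)\mapsto f^*x+y$ (which is exactly the isogeny $\phi$ the paper records in \eqref{isogeny equation}) has finite kernel and equal-dimensional source and target. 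Two small points worth making explicit: your definition $P(\widetilde{C}/C)=(\ker\mathrm{Nm}_f)^0$ is not stated in the paper before the theorem, but it is the standard one and is consistent with the description $P(\widetilde{C}/C)=H^0(\widetilde{C},\omega_{\widetilde{C}})_-^*/H_1(\widetilde{C},\mathbb{Z})_-$ that the paper derives afterwards in Lemma \ref{prymvar}; and the identification $(f_*\omega_{\widetilde{C}})^H=\omega_C$ (equivalently, that $H$-invariant holomorphic forms descend through the ramification) is the one genuinely non-formal input, which you correctly flag as requiring tameness --- harmless here since the paper works with compact Riemann surfaces over $\C$.
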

We note that the isogeny mentioned in Theorem \ref{Subtorus-jacobian} is given by
\begin{gather}\label{isogeny equation}
\phi:J\times P(\widetilde{C}/C)\to \widetilde{J}\nonumber\\
\phi(c,\widetilde{c})=f^*c+\widetilde{c}
\end{gather}
For a Galois covering $f:\widetilde{C}\to C$ with $((C, x_1,\dots, x_r),f:\widetilde{C}\to C)\in R(H,g,r)$ and $\deg(f)=n$, one can compute the genus $\widetilde{g}:=g(\widetilde{C})$ by the Riemann-Hurwitz formula. Using the isogeny $f^*J\times P(\widetilde{C}/C)\sim \widetilde{J}$ we see that the dimension of the Prym variety $P(\widetilde{C}/C)=P(f)$ is equal to $l=\widetilde{g}-g$. The canonical principal polarization on $\widetilde{J}$ restricts to a polarization of type $D=(1,\dots,1,n,\dots,n)$ where $1$ occurs $g-1$ times and $n$ occurs $l-(g-1)$ times if $r=0$ and $1$ occurs $g$ times and $n$ occurs $l-g$ times otherwise.\par
Note that, it follows from Theorem \ref{Subtorus-jacobian} that if $C\cong\P^1$, then the Prym variety $P(\widetilde{C}/C)$ is isogeneous to the Jacobian $\widetilde{J}$. We will use this point in the sequel to deduce that some families are special. \par
Let $A_{l,D}$ denote the moduli space of complex abelian varieties of dimension $l$ and polarization type $D$. More precisely, $A_{l,D}=\H_l/\Gamma_D$ is the moduli space of polarized abelian varieties of type $D$ where \par  $\H_l:=\{M\in M_l(\C)\mid ^tM=M, \ima M\geq 0\}$ is the \emph{Siegel upper half space  of genus $l$} and

\[\Gamma_D=\{R\in\gl_{2l}(\Z)\mid R\begin{pmatrix}

0  &D\\

-D &0
\end{pmatrix}  {^tR}=\begin{pmatrix}

0  &D\\

-D &0
\end{pmatrix}\}\]
is an arithmetic subgroup. The above constructions behave well also in the families of curves and hence we obtain a morphism
\begin{equation}
\mathscr{P}=\mathscr{P}(H,g,r):R(H,g,r)\to A_{l,D}.
\end{equation}
We call the map $\mathscr{P}$ the \emph{Prym map of type $(H,g,r)$}. Our objective in this paper is to study the image of this map. The Prym map is even in the classical case known to be non-injective which implies that one needs to study other closely related aspects, namely the generic injectivity. \par
By the above mentioned $H$-action on $H^0(\widetilde{C},\omega_{\widetilde{C}})$ and
$H_1(\widetilde{C},\mathbb{Z})$,  we set:
\begin{align} \label{plus minus}
H^0(\widetilde{C},\omega_{\widetilde{C}})_+=H^0(\widetilde{C},\omega_{\widetilde{C}})^{H}(\cong H^0(C,\omega_{C})) \text{ and }\\
H^0(\widetilde{C},\omega_{\widetilde{C}})_-=H^0(\widetilde{C},\omega_{\widetilde{C}})/H^0(\widetilde{C},\omega_{\widetilde{C}})_+=
\bigoplus\limits_{\chi\in\irr(H)\setminus\{1\}}H^0(\widetilde{C},\omega_{\widetilde{C}})^{\chi}
\end{align}
Notice that $H^0(\widetilde{C},\omega_{\widetilde{C}})=H^0(\widetilde{C},\omega_{\widetilde{C}})_+\oplus H^0(\widetilde{C},\omega_{\widetilde{C}})_-$. \par The following lemma is then an immediate consequence of Theorem \ref{Subtorus-jacobian} above.
\begin{lemma} \label{prymvar}
Let $f:\widetilde{C}\to C$ be a Galois covering, then
\begin{equation} \label{Prymdef}
P(\widetilde{C}/C)={H^0(\widetilde{C},\omega_{\widetilde{C}})_-}^*/H_1(\widetilde{C},\mathbb{Z})_-
\end{equation}
\end{lemma}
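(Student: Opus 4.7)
The plan is to derive Lemma~\ref{prymvar} directly from Theorem~\ref{Subtorus-jacobian} combined with the $H$-equivariant Hodge decomposition on $\widetilde{C}$. I would begin by writing $\widetilde{J}=H^{0}(\widetilde{C},\omega_{\widetilde{C}})^{*}/H_{1}(\widetilde{C},\Z)$ and noting that the action of $H$ on $\widetilde{C}$ induces compatible $H$-actions on both $H^{0}(\widetilde{C},\omega_{\widetilde{C}})$ and $H_{1}(\widetilde{C},\Z)$, intertwined by Poincar\'e duality. Since $H$ is finite and we are in characteristic zero, the averaging idempotent $e_{+}=\frac{1}{|H|}\sum_{h\in H}h$ and its complement $e_{-}=1-e_{+}$ yield $H$-equivariant splittings of both spaces over $\Q$, compatible with the Hodge structure. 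This gives the splitting $H^{0}(\widetilde{C},\omega_{\widetilde{C}})=H^{0}(\widetilde{C},\omega_{\widetilde{C}})_{+}\oplus H^{0}(\widetilde{C},\omega_{\widetilde{C}})_{-}$ already fixed in \eqref{plus minus}, and analogously $H_{1}(\widetilde{C},\Q)=H_{1}(\widetilde{C},\Q)_{+}\oplus H_{1}(\widetilde{C},\Q)_{-}$; one defines the integral lattices $H_{1}(\widetilde{C},\Z)_{\pm}:=H_{1}(\widetilde{C},\Z)\cap H_{1}(\widetilde{C},\Q)_{\pm}$, each of full rank in the corresponding rational summand.

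Next I would match the $+$-summand with $f^{*}J$. By part~(1) of Theorem~\ref{Subtorus-jacobian}, $f^{*}J=(\widetilde{J}^{H})^{0}$, and $f^{*}:H^{0}(C,\omega_{C})\xrightarrow{\sim}H^{0}(\widetilde{C},\omega_{\widetilde{C}})^{H}=H^{0}(\widetilde{C},\omega_{\widetilde{C}})_{+}$, so the complex torus $H^{0}(\widetilde{C},\omega_{\widetilde{C}})_{+}^{*}/H_{1}(\widetilde{C},\Z)_{+}$ is canonically identified with $f^{*}J$ (up to a possible finite quotient that vanishes when one passes to connected components of the invariant part).

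Finally, by part~(2) of Theorem~\ref{Subtorus-jacobian}, the isogeny $\phi:J\times P(\widetilde{C}/C)\to\widetilde{J}$ of \eqref{isogeny equation} realises $P(\widetilde{C}/C)$ as the complementary abelian subvariety to $f^{*}J$ inside $\widetilde{J}$ (equivalently, as the connected component of the identity of the kernel of the norm map $\mathrm{Nm}_{f}:\widetilde{J}\to J$). Under the direct-sum decomposition above, this complement is precisely the $-$-part, yielding
\[
P(\widetilde{C}/C)=H^{0}(\widetilde{C},\omega_{\widetilde{C}})_{-}^{*}/H_{1}(\widetilde{C},\Z)_{-},
\]
as required.

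The main point requiring care is that the identification is an equality and not merely an isogeny: one must verify that $H_{1}(\widetilde{C},\Z)_{-}$ is exactly the lattice inherited from the construction of $P(\widetilde{C}/C)$ as $(\ker\mathrm{Nm}_{f})^{0}$, rather than a finite-index sublattice. This follows because $e_{-}$ acts rationally and its image in $\widetilde{J}$, saturated inside $\widetilde{J}$, coincides with $(\ker\mathrm{Nm}_{f})^{0}$; the saturation corresponds precisely to taking $H_{1}(\widetilde{C},\Q)_{-}\cap H_{1}(\widetilde{C},\Z)$. Once this lattice bookkeeping is settled, the lemma drops out immediately from Theorem~\ref{Subtorus-jacobian}.
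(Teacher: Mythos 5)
Your argument is correct and follows exactly the route the paper intends: the paper offers no written proof, merely declaring the lemma ``an immediate consequence of Theorem~\ref{Subtorus-jacobian},'' and your derivation via the $H$-equivariant splitting of $H^0(\widetilde{C},\omega_{\widetilde{C}})$ and $H_1(\widetilde{C},\Z)$, the identification of the $+$-part with $f^*J$, and the identification of the Prym with the saturated $-$-part is precisely the missing chain of reasoning. Your attention to the lattice bookkeeping (that $H_1(\widetilde{C},\Z)_-$ is the saturated lattice $H_1(\widetilde{C},\Z)\cap H_1(\widetilde{C},\Q)_-$, so the identification is an equality rather than an isogeny) is a genuine detail the paper glosses over, and it is handled correctly.
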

\begin{definition}\label{Prym datum general}
A \emph{Prym datum} (of type $(H,g,r)$, compare \cite{CFGP}, Definition 3.1) is a triple $(\tilde{G}, \tilde{\theta}_s,H)$ where $\tilde{G}$ is a finite group, $\widetilde{\theta}_s:\Gamma_s\to\widetilde{G}$ is an epimorphism as above and $H$ is a normal subgroup of $\tilde{G}$, such that the quotient $f:\widetilde{C}\to C=\widetilde{C}/H$ is in $R(H,g,r)$.
\end{definition}
Let $\widetilde{G}$ be a finite group and let $\widetilde{C}\to \mathbb{P}^{1}$  be a $\widetilde{G}$-Galois covering of $\mathbb{P}^{1}$ with the Prym datum $(\tilde{G}, \tilde{\theta}_s,H)$. Set $V=H^0(\widetilde{C},\omega_{\widetilde{C}})$ and let $V=V_{+}\oplus V_{-}$ be the decomposition into $H$-invariant and $H$-anti-invariant parts as above. There is also the corresponding Hodge decomposition $H^1(\widetilde{C},\mathbb{C})_-=V_{-}\oplus \overline{V}_{-}$. Set $\Lambda=H_1(\widetilde{C},\mathbb{Z})_-$. The associated Prym variety is by definition the following abelian variety.
\begin{equation}\label{Prymdef1}
P(\widetilde{C}/C)=V^{*}_{-}/\Lambda,
\end{equation}
see \cite{BL} for more details.
\begin{remark} \label{smallestshimura}
There is a $\mathbb{Q}$-variation of Hodge structures over $T$ with fibers given by $H^1(\tilde{C_t},\mathbb{Q})_-$. We choose a Hodge-generic point $t_0\in T(\mathbb{C})$ and let $M\subset
\gl(H^1(\tilde{C_{t_0}},\mathbb{Q})_-)$ be the generic Mumford-Tate group of the family. Let $P_{f}$ be the natural Shimura variety associated to the reductive group $M$. So in general, this subvariety is different from the one with the same notation in \cite{MZ}, Remark 2.7. The special subvariety $P_{f}$ is the \emph{smallest} special subvariety that contains $Z$ and its dimension depends on the real adjoint group $M^{ad}_{\mathbb{R}}$. Indeed, if $M^{ad}_{\mathbb{R}}= Q_{1}\times...\times Q_{r}$ is the decomposition of $M^{ad}_{\mathbb{R}}$ to $\mathbb{R}$-simple groups then $\dim P_{f}= \sum \delta(Q_{i})$. If $Q_{i}(\mathbb{R})$ is not compact then $\delta(Q_{i})$ is the dimension of the corresponding symmetric space associated to the real group $Q_{i}$ which can be read from Table V in \cite{H}. If $Q_{i}(\mathbb{R})$ is compact (in this case $Q_{i}$ is called anisotropic) we set $\delta(Q_{i})=0$. We remark that for $Q=\PSU(p,q)$, $\delta(Q)=pq$ and for $Q=Psp_{2p}$, $\delta(Q)=\frac{p(p+1)}{2}$. Our computations below show that in fact such factors do occur in the decomposition of $M$, see Lemma~\ref{dimeigspace}. Note that $Z$ is a Shimura subvariety if and only if $\dim Z=\dim P_{f}$, i.e., if and only if $Z=P_{f}$.
\end{remark}
These observations lead to the following key lemma.
\begin{lemma} \label{dimP_f}
If $\dim P_f>s-3$, then the family does not give rise to a special subvariety of the Prym locus.
\end{lemma}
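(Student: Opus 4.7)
The plan is to bound $\dim Z$ from above by $s-3$ and then invoke the characterization of specialness given at the end of Remark~\ref{smallestshimura}, namely that $Z$ is a special subvariety of $A_{l,D}$ precisely when $\dim Z = \dim P_f$.

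First I would fix the Prym datum $(\widetilde{G},\widetilde{\theta}_s,H)$; this pins down the discrete invariants of the family, in particular the Galois group and the local monodromy data at each branch point. Under these constraints, the isomorphism class of the cover $\widetilde{C}_t \to \P^1$ is determined, up to finite ambiguity, by the unordered configuration of its $s$ branch points on $\P^1$. Modding out by $\aut(\P^1) = \mathrm{PGL}_2$, which is a $3$-dimensional group, the base $T$ of the family maps with generically finite fibres to a variety of dimension $s-3$, so $\dim Z \leq \dim T \leq s-3$.

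Next, suppose for contradiction that $Z$ is a special subvariety of the Prym locus in $A_{l,D}$. Then by Remark~\ref{smallestshimura} we have $Z = P_f$, hence $\dim Z = \dim P_f$. Combined with the hypothesis $\dim P_f > s-3$, this forces $\dim Z > s-3$, contradicting the bound of the previous paragraph. Therefore no such family can give rise to a special subvariety.

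The only substantive step is the dimension count $\dim T \leq s-3$, which is a standard fact about families of abelian Galois covers of $\P^1$ with prescribed monodromy datum (the moduli of $s$ ordered points on $\P^1$ modulo projective automorphisms is $(s-3)$-dimensional, and forgetting the ordering is finite). Once this bound is in hand, the lemma reduces to a one-line comparison of dimensions, and I do not anticipate any further obstacle.
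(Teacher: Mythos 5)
Your proposal is correct and follows essentially the same route as the paper: bound $\dim Z\leq s-3$ by the dimension of the base of the family (the paper records this as $\dim R(\Sigma)=s-3$, coming from the $s$ branch points modulo $\mathrm{PGL}_2$), and then conclude via the criterion in Remark~\ref{smallestshimura} that $Z$ is special if and only if $Z=P_f$, which is impossible when $\dim P_f>s-3$. The only difference is that you spell out the $\mathrm{PGL}_2$ dimension count that the paper takes as already established.
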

\begin{proof}
By the constructions and explanations in previous paragraphs, we have a map $\mathscr{P}:R_g\to A_{g-1}$ (resp. $R_{g,2}\to A_g$) and $Z=\overline{\mathscr{P}(T)}\subset A_{g-1}$ (resp. $A_{g}$). Now unlike the Torelli map, the Prym map is not injective, however, it holds that $\dim Z\leq s-3$. Hence if $\dim P_f>s-3$, one concludes that $Z\neq P_f$ and therefore $Z$ is not a special subvariety by the above.
\end{proof}
In the light of the above lemma, our strategy is to show that for families with large $s$, the subvariety $P_f$ constructed above is of dimension strictly greater than $s-3$, hence the subvariety $Z$ is not special by the above lemma.
\subsection{Abelian covers of $\mathbb{P}^{1}$ their invariants}
An abelian Galois cover of $\mathbb{P}^{1}$ is determined by a
collection of equations in the following way: Consider an $m\times
s$ matrix $A=(r_{ij})$ whose entries $r_{ij}$ are in
$\mathbb{Z}/N\mathbb{Z}$ for some $N\geq 2$. Let
$\overline{\mathbb{C}(z)}$ be the algebraic closure of
$\mathbb{C}(z)$. For each $i=1,...,m,$ select a function $w_{i}\in
\overline{\mathbb{C}(z)}$ with
\begin{equation} \label{equation abelian}
w_{i}^{N}=\prod_{j=1}^{s}(z-z_{j})^{\widetilde{r}_{ij}} \text{ for  } i=
1,\dots, m, 
\end{equation}
in $\mathbb{C}(z)[w_{1},...,w_{m}]$. Note that $z_j\in
\mathbb{C}$. In other words, there exists a projective non-singular curve $Y$
birational to the affine curve defined by the above equations together with a 
covering map $\pi : Y\to \mathbb{P}^{1}$ with abelian Galois group. 

Here $\widetilde{r}_{ij}$ is the lift of $r_{ij}$ to $\mathbb{Z}
\cap [0,N)$. We impose the condition that the sum of the columns
of $A$ are zero. This implies that the cover is \emph{not}
ramified over infinity. The matrix $A$ will be called the matrix
of the covering. We also consider the row and column spans of $A$ as modules
over the ring $\mathbb{Z}/N\mathbb{Z}$ and so all of the
operations with rows and columns will be carried out in the ring
$\mathbb{Z}/N\mathbb{Z}$, i.e., it will be considered modulo $N$.
The abelian Galois group $G$ of the covering is isomorphic to the
column span of the matrix $A$ and hence can be considered as a
subgroup of $(\mathbb{Z}/N\mathbb{Z})^{m}$ (denoted also by
$\mathbb{Z}_{N}^{m}$).
\begin{remark} \label{abeliangroupcharacter}
Let $\widetilde{G}$ be a finite abelian group, then the character group of $\widetilde{G}$, $\mu_{\widetilde{G}}=\Hom(\widetilde{G},\mathbb{C}^{*})$ is isomorphic to $\widetilde{G}$. To see this, first assume that $\widetilde{G}=\mathbb{Z}/N$ is a cyclic group. Fix an isomorphism between $\mathbb{Z}/N$ and the group of $N$-th roots of unity in $\mathbb{C}^{*}$ via $1\mapsto\exp(2\pi i/N)$. Now the group $\mu_{\widetilde{G}}$ is isomorphic to this latter group via $\chi\mapsto \chi(1)$. In the general case, $\widetilde{G}$ is a product of finite cyclic groups, so this isomorphism extends to an isomorphism $\varphi_{\widetilde{G}}: \widetilde{G} \xrightarrow{\sim} \mu_{\widetilde{G}}$. In the sequel, we use this isomorphism frequently to identify elements of $\widetilde{G}$ with its characters without referring to the isomorphism $\varphi_{\widetilde{G}}$.
\end{remark}

For our applications, with notation as in the previous pages, we fix an isomorphism of $\widetilde{G}$ with a product of $\mathbb{Z}/n\Z$'s and an embedding of $\widetilde{G}$ into $(\mathbb{Z}/N\Z)^{m}$.\par Furthermore we set $\tilde\alpha_j = \sum_{i=1}^m n_i \tilde{r}_{ij} \in \Z$ (note that $\tilde\alpha_j$ is not necessarily in $\Z\cap [0,N)$).

Let us denote by $\omega_X$ the canonical sheaf of $X$.

Similar to the case of $\pi_{*}(\mathcal{O}_X)$, the sheaf $\pi_{*}(\omega_X)_{\chi}$ decomposes according to the action of $\widetilde{G}$.

For the line bundles $L_{\chi}$ corresponding to the character $\chi$ associated to the element $a\in \widetilde{G}$ and $\pi_{*}(\omega_X)_{\chi}$ we have the following result proven in \cite{MZ}.

\begin{lemma} \label{eigenbundleformula}
Notation being as above,  $L_{\chi}=\mathcal{O}_{\mathbb{P}^{1}}(\displaystyle\sum_{1}^{s}\langle\frac{\tilde\alpha_j}{N}\rangle)$, where $\langle x\rangle$ denotes the fractional part of the real number $x$ and
\[\pi_{*}(\omega_X)_{\chi}= \omega_{\mathbb{P}^{1}} \otimes L_{\chi^{-1}}=\mathcal{O}_{\mathbb{P}^{1}}(-2+\sum_{1}^{s}\langle -\frac{\tilde\alpha_j}{N}\rangle).\]
\end{lemma}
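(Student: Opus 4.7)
The plan is to work from the standard eigenspace decomposition $\pi_{*}\mathcal{O}_X = \bigoplus_{\chi} L_{\chi}^{-1}$ under the $\widetilde{G}$-action and pin down $L_\chi$ by constructing an explicit local generator of the $\chi$-eigen-subsheaf at each branch point. Using the identification $\varphi_{\widetilde{G}}$ of Remark \ref{abeliangroupcharacter}, write $\chi$ as an element $(n_1,\dots,n_m)\in\widetilde{G}\subset (\Z/N\Z)^m$; then the rational function $w_1^{n_1}\cdots w_m^{n_m}$ transforms under $\widetilde{G}$ precisely by $\chi$, so it generates the $\chi$-eigenline of $\pi_{*}\mathcal{O}_X$ over the unramified locus.

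To compute $\deg L_\chi$ I would analyze each branch point $z_j$ in turn. From \eqref{equation abelian} one has $w_i^N = (z-z_j)^{\tilde r_{ij}}\cdot u_{ij}$ with $u_{ij}$ a unit at $z_j$, so the product $\prod_i w_i^{n_i}$, viewed as a rational section over the base, has valuation $\tilde\alpha_j/N$ along the divisor $\{z=z_j\}$. Because this valuation is in general non-integral, the section is not yet a regular generator; multiplying by $(z-z_j)^{-\lfloor \tilde\alpha_j/N\rfloor}$ produces a regular generator whose local order of vanishing is exactly the fractional part $\langle \tilde\alpha_j/N\rangle$. Summing over all $s$ branch points and invoking the column-sum-zero condition on $A$ (which ensures there is no contribution from $\infty$) gives $\deg L_\chi = \sum_{j=1}^{s}\langle\tilde\alpha_j/N\rangle$; this sum is indeed an integer, since $\sum_j \tilde\alpha_j\equiv 0\pmod N$, forcing $L_\chi \cong \mathcal{O}_{\P^1}\bigl(\sum_j\langle\tilde\alpha_j/N\rangle\bigr)$ on $\P^1$.

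For the second formula I would invoke Grothendieck duality for the finite flat morphism $\pi$, which yields
\[
\pi_{*}\omega_X \;\cong\; \mathscr{H}om_{\mathcal{O}_{\P^1}}(\pi_{*}\mathcal{O}_X,\omega_{\P^1}) \;\cong\; \omega_{\P^1}\otimes (\pi_{*}\mathcal{O}_X)^{\vee}.
\]
Passing to $\chi$-isotypic parts and observing that dualising exchanges the $\chi$- and $\chi^{-1}$-eigenspaces, one gets $\pi_{*}(\omega_X)_\chi = \omega_{\P^1}\otimes L_{\chi^{-1}}$. Substituting $\omega_{\P^1}\cong\mathcal{O}_{\P^1}(-2)$ and applying the first half of the lemma to $\chi^{-1}$ (for which the corresponding integer is $-\tilde\alpha_j$) produces the claimed expression $\mathcal{O}_{\P^1}\bigl(-2 + \sum_j \langle -\tilde\alpha_j/N\rangle\bigr)$.

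The main obstacle is the local analysis at the branch points: one must verify that the chosen generator $\prod_i w_i^{n_i}\cdot (z-z_j)^{-\lfloor\tilde\alpha_j/N\rfloor}$ is truly a \emph{generator} of $L_\chi^{-1}$ at $z_j$, i.e.\ that no further vanishing is introduced. This requires tracking the action of the cyclic inertia subgroup at $z_j$ on the various branches of the cover, and it is here that the convention $\tilde r_{ij}\in \Z\cap[0,N)$ plays an essential role; without this normalisation the fractional-part formula breaks. Once the local picture is fixed, the degree calculation and the duality step are both routine.
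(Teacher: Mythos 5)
Your argument is correct and is the standard one: decompose $\pi_{*}\mathcal{O}_X$ into eigensheaves generated by $\prod_i w_i^{n_i}$, read off the degree from the fractional valuations $\tilde\alpha_j/N$ at the branch points (and at $\infty$, where the column-sum-zero condition kills any contribution), then pass to $\pi_{*}\omega_X$ by relative duality, which swaps $\chi$ and $\chi^{-1}$. The paper itself gives no proof, deferring to \cite{MZ}, where essentially this same computation (explicit eigen-generators and local valuations, with the dual statement for $H^1(C,\mathcal{O}_C)$) is carried out, so there is nothing to correct.
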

Let $n\in\widetilde{G}$ be the element $(n_1,\dots, n_m)\in\widetilde{G}\subset (\mathbb{Z}/N\mathbb{Z})^{m}$. By Lemma \ref{eigenbundleformula}, $\dim H^0(\widetilde{C},\omega_{\widetilde{C}})_{n}=-1+\displaystyle \sum_{j=1}^{s}\langle-\frac{\tilde\alpha_j}{N}\rangle$. A basis for the $\mathbb{C}$-vector space $H^0(\widetilde{C},\omega_{\widetilde{C}})$ is given by the forms
\begin{equation} \label{basis}
\omega_{n,\nu}=z^{\nu} w_{1}^{n_1}\cdots w_{m}^{n_m}\displaystyle \prod_{j=1}^{s} (z-z_j)^{\lfloor -\frac{\tilde\alpha_j}{N}\rfloor}dz.
\end{equation}

Here $0\leq\nu\leq -1+\displaystyle \sum_{j=1}^{s}\langle-\frac{\tilde\alpha_j}{N}\rangle$. The fact that the above elements constitute a basis can be seen in \cite{MZ}, proof of Lemma 5.1, where the dual version for $H^1(C,\mathcal{O}_C)$ is proved. \par

The general method of our later computations in Section ~\ref{Main results} is as follows: We remark that if $n=(n_1,\dots,n_m)\in\widetilde{G}=\mathbb{Z}_{d_1}\times\cdots\times\mathbb{Z}_{d_m}\subset(\mathbb{Z}/N\mathbb{Z})^{m}$, we consider the $n_i\in [0,N)$ and their sum as integers. \par The action of the abelian subgroup $H$ is naturally inherited from that of $\tilde G$ and the latter is described as follows: Let $g=(g_1,\dots, g_m)\in\tilde G$ and write $\ord g_i=v_i$. Then the action of $g$ on each $w_i$ is given by $g\cdotp w_i=\xi_{v_i}w_i$, where $\xi_{v_i}$ denotes a $v_i$-th primitive root of unity. 

With this notation, $H^0(\widetilde{C},\omega_{\widetilde{C}})_+$, i.e., the group of $H$-invariant differential forms is the set of all $\omega_{n,\nu}$ with $\sum n_i/a_i\in\mathbb{Z}$ for all $h=(h_1,\dots,h_m)\in H$ (with $a_i=\ord h_i$).

The eigenspace $H^0(\widetilde{C},\omega_{\widetilde{C}})_-$ is then given by the complement, i.e., the set of all $\omega_{n,\nu}$ for whom there exists $h=(h_1,\dots,h_m)\in H$ such that $\sum n_i/a_i\notin\mathbb{Z}$. \par The following lemma generalizes \cite{M21}, Lemma 2.5.

\begin{lemma} \label{dimeigspace}
Let the notation be as above. The group $\widetilde{G}$ acts on the space $H^0(\widetilde{C},K_{\widetilde{C}})_{-}$ and for $g\in\widetilde{G}$, it holds that
\begin{align}
H^0(\widetilde{C},K_{\widetilde{C}})_{-,g}=\begin{cases}
H^0(\widetilde{C},K_{\widetilde{C}})_{g}, & \text{if } \exists h=(h_1,\dots,h_m)\in H \text{ such that }\sum g_i/a_i\notin\mathbb{Z}, \text{ with }a_i=\ord h_i,\\
0 & \text{otherwise.}
\end{cases}
\end{align}
Similar statements hold for $H^1(\widetilde{C},\C)_{-,g}$.
\end{lemma}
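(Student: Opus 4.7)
The plan is to exploit the commutativity of $\widetilde{G}$ together with the explicit basis \eqref{basis}. Since $\widetilde{G}$ is abelian, the $\widetilde{G}$-action on $H^0(\widetilde{C}, K_{\widetilde{C}})$ commutes with its restriction to $H$, so $\widetilde{G}$ preserves the $H$-isotypic decomposition $H^0(\widetilde{C}, K_{\widetilde{C}}) = H^0(\widetilde{C}, K_{\widetilde{C}})_+ \oplus H^0(\widetilde{C}, K_{\widetilde{C}})_-$, which already gives the first assertion of the lemma. It then remains, for each character $g \in \widetilde{G}$, to decide whether the $\widetilde{G}$-eigenspace $H^0(\widetilde{C}, K_{\widetilde{C}})_g$ sits inside the plus-part or the minus-part.

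The key observation is that because $\widetilde{G}$ is abelian, each eigenspace $H^0(\widetilde{C}, K_{\widetilde{C}})_g$ is simultaneously a single $H$-isotypic component, namely the one associated to the character $\chi_g|_H$. Hence $H^0(\widetilde{C}, K_{\widetilde{C}})_g \subset H^0(\widetilde{C}, K_{\widetilde{C}})_+$ iff $\chi_g|_H$ is trivial, and otherwise $H^0(\widetilde{C}, K_{\widetilde{C}})_g \subset H^0(\widetilde{C}, K_{\widetilde{C}})_-$. Using the description of the $H$-action recalled just before the lemma, $\chi_g|_H$ is trivial precisely when $\sum g_i/a_i \in \mathbb{Z}$ for every $h = (h_1, \dots, h_m) \in H$ with $a_i = \ord h_i$. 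This produces exactly the dichotomy in the statement: if some $h \in H$ violates the integrality condition then $H^0(\widetilde{C}, K_{\widetilde{C}})_{-,g} = H^0(\widetilde{C}, K_{\widetilde{C}})_g$, otherwise $H^0(\widetilde{C}, K_{\widetilde{C}})_{-,g} = 0$.

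For the analogous statement about $H^1(\widetilde{C}, \mathbb{C})_-$, I would invoke the Hodge decomposition $H^1(\widetilde{C}, \mathbb{C})_- = V_- \oplus \overline{V}_-$ introduced just before Lemma \ref{prymvar}. Complex conjugation identifies the $g$-eigenspace in $V_-$ with the $(-g)$-eigenspace in $\overline{V}_-$, and the integrality condition is invariant under $g \mapsto -g$ once one reads the indices modulo $N$, so the same case distinction applies verbatim to $H^1(\widetilde{C}, \mathbb{C})_{-,g}$.

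I do not foresee any serious obstruction; the argument is essentially a routine application of the fact that two commuting actions of abelian groups share a simultaneous eigenbasis, combined with the explicit formula in \eqref{basis}. The only point requiring care is notational: the components $g_i$ are representatives in $[0,N)$, so the sum $\sum g_i/a_i$ should be regarded as a rational number before asking whether it lies in $\mathbb{Z}$, and the identification of $\widetilde{G}$ with $\mu_{\widetilde{G}}$ from Remark \ref{abeliangroupcharacter} must be tracked consistently throughout.
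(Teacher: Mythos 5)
Your proposal is correct and follows essentially the same route as the paper, whose proof simply observes that everything follows from the explicit basis \eqref{basis} and the description of the $H$-action on the forms $\omega_{n,\nu}$ given just before the lemma; you have merely spelled out the details (that each $\widetilde{G}$-eigenspace is a single $H$-isotypic component, hence lies entirely in the plus- or minus-part according to whether $\chi_g|_H$ is trivial, i.e.\ whether $\sum g_i/a_i\in\mathbb{Z}$ for all $h\in H$). The treatment of $H^1(\widetilde{C},\C)_{-,g}$ via the Hodge decomposition is likewise consistent with what the paper intends.
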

\begin{proof}
All of the claims follow directly using the basis for $H^0(\widetilde{C},K_{\widetilde{C}})$ and the action of $\widetilde{G}$ on $H^0(\widetilde{C},K_{\widetilde{C}})_-$  decribed above.
\end{proof}

Families of abelian $\widetilde{G}$-covers of $\P^1$ can be constructed as follows:  Let $T_{s}\subset (\mathbb{A}_{\C}^{1})^{s}$ be the complement of the big diagonals, i.e., $T_{s}= \{(z_{1},\dots,z_{s})\in(\mathbb{A}_{\C}^{1})^{s}\mid z_{i}\neq z_{j} \forall i\neq j \}$. Over this affine open set we define a family of abelian covers of $\mathbb{P}^{1}$ by the equation \ref{equation abelian} with branch points $(z_{1},\dots,z_{s})\in T_{s}$ and $\widetilde{r}_{ij}$ the lift of $r_{ij}$ to $\mathbb{Z}\cap[0,N)$ as before. Varying the branch points we get a family $f:\tilde{\sC}\to T_s$ of smooth projective curves over $T_s$ (viewed as a complex manifold of dimension $s-3$) whose fibers $\tilde{C}_t$ are abelian covers of $\mathbb{P}^{1}$ introduced above.

\begin{remark} \label{eigenspacetype}
Let $f:\tilde{\sC}\rightarrow T$ be a family of abelian Galois covers of $\mathbb{P}^{1}$ as constructed in section $1$. Then the local system $\mathcal{L}=R^{1}f_{*}\mathbb{C}_-$ gives rise to a polarized variation of Hodge structures (PVHS) of weight $1$ whose fibers are the HS discussed above. Consider the associated monodromy representation $\pi_{1}(T,x)\to \gl(V)$, where $V$ is the fiber of $\mathcal{L}$ at $x$. The Zariski closure of the image of this morphism is called the \emph{monodromy group} of $\mathcal{L}$. We denote the identity component of this group by $\mon^{0}(\mathcal{L})$. The PVHS decomposes according to the action of the abelian Galois group $G$ and the eigenspaces $\mathcal{L}_{i}$ (or $\mathcal{L}_{\chi}$ where $i\in G$ corresponds to character $\chi\in \mu_{G}$ by Remark~\ref{abeliangroupcharacter}) are again variations of Hodge structures and we are mainly interested in these. Take a $t\in T$ and assume that $h^{1,0}((\mathcal{L}_{i})_{t})=a$ and $h^{0,1}((\mathcal{L}_{i})_{t})=b$.  The above computations show how to calculate $h^{1,0}((\mathcal{L}_{i})_{t})$ (resp. $h^{0,1}((\mathcal{L}_{i})_{t})$). Since monodromy group respects the polarization of the Hodge structures (\cite{R}, 3.2.6), $(\mathcal{L}_{i})_{t}$ is equipped with a Hermitian form of signature $(a,b)$ (see \cite{DM}, 2.21 and 2.23). This implies that $\mon^{0}(\mathcal{L}_{i}) \subseteq U(a,b)$. In this case, we say that $\mathcal{L}_{i}$ is \emph{of type} $(a,b)$. Lemma~\ref{dimeigspace} (together with \cite{MZ}, Proposition 2.8) computes the type of any eigenspace. Two eigenspaces $\mathcal{L}_{i}$ and $\mathcal{L}_{j}$ of types $(a,b)$ and $(a^{\prime},b^{\prime})$ are said to be \emph{of distinct types} if $\{a,b\}\neq \{a^{\prime},b^{\prime}\}$.  We call an eigenspace $\mathcal{L}_{i}$ \emph{trivial} if it is of type $(a,0)$ or $(0,b)$.
\end{remark}
\begin{remark} \label{monodromydecomp}
Let $\mathcal{V}$ be a variation of Hodge structures over a non-singular connected complex algebraic variety. If there is a point $s$, such that the Mumford-Tate group $\mt_s$ is abelian, then the connected monodromy group is a normal subgroup of the generic Mumford-Tate group $M$. In fact in this case $\mon^0=M^{der}$, see \cite{Andr}. In particular, if $Z\subset A_g$ is special, then $\mon^0=M^{der}$. Consequently, if the family $f\colon Y\to T$ gives rise to a Shimura subvariety and $M^{ad}_{\mathbb{R}}=\prod_{1}^{l} Q_{i}$ as a product of simple Lie groups then $\mon^{0,ad}_{\mathbb{R}}=\prod_{i\in K} Q_{i}$ for some $K\subset \{1,\dots, l\}$.
\end{remark}
\section{Shimura families in the Prym locus} \label{shimura families}
Recall from Section \ref{prym and abelian} that $A_{l,D}=\H_l/\Gamma_D$ is the moduli space of polarized abelian varieties of type $D$, where $\H_l:=\{M\in M_l(\C)\mid ^tM=M, \im M\geq 0\}$, is the Siegel upper half space  of genus $l$ and $\Gamma_D=\{R\in\gl_{2l}(\Z)\mid R\begin{pmatrix}
0  &D\\

-D &0
\end{pmatrix}  {^tR}=\begin{pmatrix}
0  &D\\

-D &0
\end{pmatrix}\}$ is an arithmetic subgroup. Note that $\H_l=\gsp_{2l}(\R)/K$, where $\gsp_{2l}$ is the standard $\Q$-group of symplectic similitudes on the standard symplectic $\Q$-space $\Q^{2l}$ and $K$ is a maximal compact subgroup. So $A_{l,D}$ can be written as a double quotient $\Gamma_D\backslash\gsp_{2l}(\R)/K$. Such double quotients are called Shimura variety and their structure has beeen studied extensively. A special (or Shimura) subvariety of $A_{l,D}$ is then an algebraic subvariety of the form $Y=\Gamma\backslash\D\hookrightarrow A_{l,D}$ induced by an injective homomorphism $\L\hookrightarrow \Sp_{2l}(\R)$ of algebraic groups. In particular, it is a totally geodesic subvariety.\par Let $\widetilde{G}$ be a finite group and consider a family $\widetilde{\sC}\to T_s$ whose fibers $\widetilde{C}_t$ are $\widetilde{G}$-Galois coverings of $\mathbb{P}^{1}$ with a fixed Prym datum $\Sigma\coloneqq(\tilde{G}, \tilde{\theta}_s,H)$. Associating to $t\in T_s$ the class of the pair $((C_t,x_1,\dots,x_r),\pi_t:\widetilde{C}_t\to C_t)$ gives a map $T_s\to R(H,g,r)$ with discrete fibers. We denote the image of this map by $R(\Sigma)$. It follows that $R(\Sigma)$ is a subvariety of dimension equal to $s-3$, see also \cite{CFGP}, p. 6. As in the last section, set $V_t=H^0(\widetilde{C}_t,\omega_{\widetilde{C}_t})$ and let $V_t=V_{+,t}\oplus V_{-,t}$ be the decomposition under the action of $H$. There is also the corresponding Hodge decomposition $H^1(\widetilde{C}_t,\mathbb{C})_-=V_{-,t}\oplus \overline{V}_{-,t}$. Set $\Lambda_t=H_1(\widetilde{C}_t,\mathbb{Z})_-$. The associated Prym variety is by \ref{Prymdef1}, $P(\widetilde{C}_t/C_t)=V^{*}_{-,t}/\Lambda_t$, an abelian variety of dimension $l=\widetilde{g}-g$. So we have a map $R(\Sigma)\xrightarrow{\mathscr{P}} A_{l,D}$.\par In this paper, we are interested in determining whether the subvariety $Z=\overline{\mathscr{P}(R(\Sigma))}\subset A_{l,D}$ is a special or Shimura subvariety.\par We remark that one can, possibly after replacing $T_s$ with a suitable finite cover, endow the abelian scheme with a level structure and hence consider the resulting map $T_s\to A_{l,D,n}$ to the fine moduli space. Note that the answer to the above question is independent of the level structure or the polarization. Alternatively, we can consider $A_{l,D}$ as a coarse moduli space. The Prym varieties of the fibers of the family $\widetilde{C}_t\to T_s$ fit into a family $P\rightarrow T_s$ which is an abelian scheme over $T_s$ that admits naturally an action of the group ring $\mathbb{Z}[\widetilde{G}]$. This action defines a Shimura subvariety of PEL type $P(\widetilde G)$ in $A_{l,D}$ (or in the stack $\mathcal{A}_{l,D}$) that contains $Z$. The following constrcution of the subvariety $P(\widetilde G)$ is adapted for the case of  Prym varieties from \cite{M10}. See also \cite{FGP} for a different approach. Fix a base point $t \in T_s$ and let $(P_t,\lambda)$ be the corresponding Prym variety with $\lambda$ as its polarization of type $D$. Let $(V_{\mathbb{Z}},\psi)$ be as above. We fix a symplectic similitude $\sigma :H^{1}(P_t, \mathbb{Z})\to V_{\mathbb{Z}}$. Let $F=\mathbb{Q}[\widetilde{G}]$. The group $\widetilde{G}$ acts on $H^0(\widetilde{C}_t,\omega_{\widetilde{C}})_-$ and thereby on the Prym variety $P(\widetilde{C}_t/C_t)$. We therefore view $H^0(\widetilde{C}_t,\omega_{\widetilde{C}})_-$ as an $F$-module. Via $\sigma$, the Hodge structure on $H^1(P_t,\mathbb{Q})=H^1(\widetilde{C}_t,\mathbb{Q})_-$ corresponds to a point $y\in\H_l$ and one obtains the structure of an $F$-module on $V_{\mathbb{Q}}$. $F$ is isomorphic to a product of cyclotomic fields and is equipped with a natural involution $*$ which is complex conjugation on each factor. The polarization $\psi$ on $V_{\mathbb{Q}}$ satisfies
\[\psi(bu,v)=\psi(u,b^{*}v) \text{ for all } b\in F \text{ and } u,v \in V.\]
Let us define the subgroup:
\begin{equation}\label{subgroup}
N= \gsp(V_{\mathbb{Q}}, \psi)\cap \gl_{F}(V_{\mathbb{Q}}).
\end{equation}
If $h_{0}: \mathbb{S}\rightarrow \gsp_{2l, \mathbb{R}}$ is the Hodge structure on $V_{\mathbb{Z}}=H^{1}(P_{t},\mathbb{Z})$ corresponding to the point $y \in \H_l$, then by the above $F$-action this homomorphism factors through the subvariety $N_{\mathbb{R}}$. 
As $A_{l,D}$ has the structure of a Shimura variety, one can talk about its Shimura (or special) subvarieties. Let $L=\gsp_{2l}$. Define the subset $Y_N\subseteq\H_l$ as follows.
\[Y_N=\{h: \mathbb{S}\rightarrow L_{2l, \mathbb{R}}| h \text{ factors through } N_{\mathbb{R}}\}.\]
The point $y$ lies in $Y_{N}$ and there is a connected component $Y^{+}\subseteq Y_{N}$ which contains $y$. We define $P(\widetilde{G})$ to be the image of $Y^{+}$ under the map
\[\H_l\to L(\mathbb{Z})\setminus\H_l\cong L(\mathbb{Q})\setminus\H_l\times
L(\mathbb{A}_f)/L(\widehat{\mathbb{Z}})\cong A_{l,D}(\mathbb{C}).\]
If $Y^{+}$ is a connected component of $Y_N$ and $\gamma K_n\in L(\mathbb{A}_f)/K_n$, the image of $Y^{+}\times \{\gamma K_n\}$ in $A_{l,D}$ is an algebraic subvariety. We define a \emph{Shimura subvariety} as an algebraic subvariety $S$ of $A_{l,D}$ which arises in this way, i.e., there exists a connected component $Y^+\subset Y_N$ and an element $\gamma K_n\in L(\mathbb{A}_f)/K_n$ such that $S$ is the image of $Y^{+}\times\{\gamma K_n\}$ in $A_{l,D}$.\par For $t=(z_{1},\dots,z_{s})\in T_s$, let $((C_t,x_1,\dots,x_r),\pi_t:\widetilde{C}_t\to C_t)\in R(H,g,r)$ be the covering corresponding to
$t$. For this $t$, consider the Hodge decomposition $H_1(\widetilde{C}_t,\mathbb{C})_-=V_{-,t}\oplus\overline{V}_{-,t}$ which corresponds to a complex structure on
$H_1(\widetilde{C}_t,\mathbb{R})_-$. We therefore get a point $f(t)\in\H_{l}$. Indeed we obtain a morphism $f:T_s\to\H_{l}$ and the following commutative diagram.
\begin{equation} \label{normal diag}
\begin{tikzcd} 
T_s \arrow{r}{f} \arrow[swap]{d}{\iota^0} & \H_{l} \arrow{d}{\iota} \\
R(\Sigma) \arrow{r}{\mathscr{P}} & A_{l,D}
\end{tikzcd}
\end{equation}
It follows by construction of $P(\widetilde{G})$ that $Z\subseteq P(\widetilde{G})$. As we remarked earlier, the Prym map is not in general injective. In order to conclude the equality $Z= P(\widetilde{G})$ and hence the speciality of $Z$, we still need to assure that the differential of the Prym map on $R(\Sigma)$ is injective, whence $\dim R(\Sigma)=\dim \mathscr{P}(R(\Sigma))$. 

The following lemma computes $\dim P(\widetilde{G})$ and so is very useful in te sequel. 
\begin{lemma} \label{dim P(G)}
Let $d_n=H^{1,0}( P(\widetilde{G}))_n=H^0(\widetilde{C},\omega_{\widetilde{C}})_{-,n}$, then 
\[\dim P(\widetilde{G})= \sum_{2n\neq 0} d_{n}d_{-n}+\frac{1}{2}\sum_{2n=0}d_{n}(d_{n}+1).\] 
\end{lemma}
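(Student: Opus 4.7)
The plan is to recognize $P(\widetilde{G})$ as the PEL-type Shimura subvariety attached to the $\mathbb{Q}$-group $N$ of~\eqref{subgroup}, and to compute its dimension by decomposing $N^{\mathrm{der}}_{\mathbb{R}}$ into $\mathbb{R}$-simple factors and invoking Remark~\ref{smallestshimura}.

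First I would decompose $V_{\mathbb{C}}=H^{1}(\widetilde{C},\mathbb{C})_{-}$ under the abelian $\widetilde{G}$-action into eigenspaces $V_{\mathbb{C}}=\bigoplus_{n\in\widetilde{G}}V_{n}$, identifying $\widetilde{G}$ with its character group via Remark~\ref{abeliangroupcharacter}. Because the splitting is compatible with the Hodge filtration, $V_{n}=V_{n}^{1,0}\oplus V_{n}^{0,1}$; since complex conjugation swaps $V_{n}^{p,q}$ with $V_{-n}^{q,p}$, one has $\dim V_{n}^{1,0}=d_{n}$ and $\dim V_{n}^{0,1}=d_{-n}$. The natural involution $*$ on $F=\mathbb{Q}[\widetilde{G}]$, determined by $g\mapsto g^{-1}$, exchanges the idempotents selecting $V_{n}$ and $V_{-n}$, so the compatibility $\psi(bu,v)=\psi(u,b^{*}v)$ forces $\psi$ to vanish on $V_{n}\times V_{m}$ unless $m=-n$, while inducing a perfect pairing between $V_{n}$ and $V_{-n}$.

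Next I would read off the simple factors of $N^{\mathrm{der}}_{\mathbb{R}}$. The centralizer of $F$ in $\gl(V_{\mathbb{C}})$ is $\prod_{n}\gl(V_{n})$; the symplectic condition ties the action on $V_{-n}$ to the contragredient of the action on $V_{n}$, so for each unordered pair $\{n,-n\}$ with $2n\neq 0$ one retains a single complex factor, whose relevant $\mathbb{R}$-form (coming from the Hermitian form produced by $\psi$ together with the identification $V_{-n}\simeq\overline{V_{n}}$) is the unitary group $\UU(d_{n},d_{-n})$; the signature is $(d_{n},d_{-n})$ because $V_{n}^{1,0}$ and $V_{n}^{0,1}$ have dimensions $d_{n}$ and $d_{-n}$ respectively. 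For each $n$ with $2n=0$ the eigenspace $V_{n}=V_{-n}$ is self-conjugate of complex dimension $2d_{n}$, the pairing restricts to a non-degenerate symplectic form on it, and the Hodge structure is a polarized weight-one structure, yielding an $\mathbb{R}$-factor $\Sp_{2d_{n}}(\mathbb{R})$.

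Finally, Remark~\ref{smallestshimura} provides $\delta(\PSU(p,q))=pq$ and $\delta(\mathrm{P}\Sp_{2p})=\tfrac{1}{2}p(p+1)$; summing over unordered pairs $\{n,-n\}$ with $2n\neq 0$ and over characters with $2n=0$ then gives the claimed formula. The main delicate point is the combinatorial bookkeeping in the previous step---making sure that each unordered pair $\{n,-n\}$ is counted exactly once in the first sum---and, on the Lie-group side, verifying that the symplectic condition over $\mathbb{R}$ really singles out the unitary form (respectively the real symplectic form) rather than any other $\mathbb{R}$-form of $\gl_{d_{n}+d_{-n}}$ (respectively of $\Sp_{2d_{n}}$); once these are in hand, the dimension formula is immediate.
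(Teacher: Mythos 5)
The paper states Lemma~\ref{dim P(G)} without any proof (it is followed only by the remark about $2\cdot 0=0$), so there is no in-text argument to compare yours against; the lemma implicitly rests on the standard dimension count for PEL-type subvarieties carried out in the sources the construction of $P(\widetilde{G})$ is adapted from (\cite{M10}, \cite{MZ}, \cite{FGP}). Your proposal is precisely that computation and it is correct: decompose $V_{\mathbb{C}}=H^1(\widetilde{C},\mathbb{C})_-$ into $\widetilde{G}$-eigenspaces, note that the compatibility $\psi(bu,v)=\psi(u,b^*v)$ forces $\psi$ to pair $V_n$ with $V_{-n}$, that $\dim V_n^{1,0}=d_n$ and $\dim V_n^{0,1}=d_{-n}$ via complex conjugation, and conclude that $N^{\mathrm{ad}}_{\mathbb{R}}$ has a factor with symmetric space of dimension $d_nd_{-n}$ (a $\PSU(d_n,d_{-n})$) for each unordered pair $\{n,-n\}$ with $2n\neq 0$, and a factor with symmetric space of dimension $\tfrac{1}{2}d_n(d_n+1)$ (coming from $\Sp_{2d_n}(\mathbb{R})$ acting on the real, self-paired eigenspace) for each $n$ with $2n=0$; Remark~\ref{smallestshimura} then gives the formula. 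The one point you flag as delicate --- counting each pair $\{n,-n\}$ exactly once --- is indeed the only real trap: read literally as a sum over all $n\in\widetilde{G}$ with $2n\neq 0$, the displayed formula would double the unitary contribution, and the way the lemma is invoked in the proof of Theorem~\ref{mainthm1} (a single nontrivial pair giving $\dim P(\widetilde{G})=d_nd_{-n}=s-3$) confirms that the intended index set is the set of unordered pairs, exactly as in your bookkeeping. Your argument is a sketch at the step identifying the real forms, but the identifications you assert (unitary of signature $(d_n,d_{-n})$ from the Hermitian form $i\psi(\cdot,\bar{\cdot})$ on $V_n$; real symplectic on the self-conjugate eigenspaces) are the correct ones, so I see no gap.
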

Note that $2.0=0$ in $\widetilde G$, so in fact the second sum in the right hand side of the above equality is always meaningful and if $|\widetilde{G}|$ is an odd number it will be zero.

\section{ordinary Prym varieties} \label{ordinary prym}
An abelian variety of dimension $g$ over a field $k$ of characteristic $p$ is called ordinary if $A[p](k)\simeq(\Z/p\Z)^g$. The Hodge filtration $\Fil:=F^1_{Hodge}\subset H^1_{\dR}(A/k)$ is well-known to be the kernel of the action of the Frobenius. Let $W$ denote the ring of Witt vectors and $\mathbf{A}$ a lift of $A$ to $W$. Then the crystalline cohomology $H^1_{\cris}(\mathbf{A}/W)$ is a lift of $H^1_{dR}(A/k)$ to characteristic zero, i.e. $H^1_{\cris}(\mathbf{A}/W)\otimes k\simeq H^1_{dR}(A/k)$. Let $W_n=W/p^nW$ be the ring of truncated Witt vectors. Then there is a bijective correspondence between the formal liftings of $A/k$ to $W_n$ and liftings of $\Fil_A$ to a direct summand of $H^1_{\cris}(\mathbf{A}/W_n)$, see \cite{DO}, Theorem 1.3. If $A$ is an ordinary abelian variety, there is a distinguished lifting $\Fil_{\can}$ of $\Fil$, which by the above explanation, defines a lifting $\mathbf{A}_{\can}$ of $A$ to $W_n$ called the \emph{canonical lifting}.

 A smooth algebraic curve $C$ over a field $k$ of characteristic $p$ is called ordinary if the Jacobian $\jac(C)$ is an ordinary abelian variety. A cover $\widetilde{C}\to C$ in $R(H,g,r)$ is called Prym-ordinary, if the Prym variety $P(\widetilde{C}/C)$ is an ordinary abelian variety. In analogy with \cite{DO}, we make the following definition.
\begin{definition} 
A Prym-ordinary cover $\widetilde{C}\to C$ is called Prym-pre-$W_2$-canonical if there is a smooth curve $\mathbf{Y}/W_n$ and a cover $\widetilde{\mathbf{Y}}\to\mathbf{Y}$ whose Prym variety $P(\widetilde{\mathbf{Y}}/\mathbf{Y})$ is isomorphic as an abelian variety to $P(\widetilde{C}/C)_{\can}$.
\end{definition}
In \cite{DO}, Dwork and Ogus introduced an invariant $\beta$ with the following key property: if an ordinary curve $C$ is pre-$W_2$-canonical (meaning that the canonical lifting of its Jacobian is again a Jacobian), then there exists a lifting $\mathbf{C}$ such that $\Fil_{\mathbf{C}}=\Fil_{\can}$ and this implies that $\beta=0$. The invariant $\beta$ can also be defined in families and in this case in denoted by $\widetilde{\beta}$. We observe that if the Prym map is injective, then this invariant vanishes also for families of Prym varieties.
\begin{lemma}\label{Prympre}
Let the Prym map $\mathscr{P}(H,g,r)$ be injective. Let $(\widetilde{C}\to C)$ be Prym-ordinary, then it is Prym-pre-$W_2$-canonical if and only if there is a lifting $\widetilde{\mathbf{Y}}\to\mathbf{Y}$ (of $\widetilde{C}\to C$) such that $\Fil_{\widetilde{Y},-}=\Fil_{\can,-}$, in particular $\beta_{\widetilde{C}/C}=0$.
\end{lemma}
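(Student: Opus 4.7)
The plan is to apply the Dwork--Ogus correspondence \cite{DO}, Theorem 1.3, between formal liftings of an ordinary abelian variety to $W_n$ and liftings of its Hodge filtration inside $H^1_{\cris}$, to the Prym variety $P(\widetilde{C}/C)$, together with the functoriality of Serre--Tate canonical liftings. The injectivity of $\mathscr{P}$ is invoked to ensure that this correspondence can be transported from Pryms back to covers.

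First I would treat the easy direction. Starting from a lifting $\widetilde{\mathbf{Y}}\to\mathbf{Y}$ of $\widetilde{C}\to C$ with $\Fil_{\widetilde{Y},-}=\Fil_{\can,-}$, the Prym variety $P(\widetilde{\mathbf{Y}}/\mathbf{Y})$ is a lift of $P(\widetilde{C}/C)$ to $W_n$ whose Hodge filtration on $H^1_{\cris}$ is $\Fil_{\widetilde{Y},-}$, since the crystalline $H^1$ of the Prym is identified with the $H$-minus part of $H^1_{\cris}(\widetilde{\mathbf{Y}}/W_n)$ and the Hodge filtration respects this splitting. By Dwork--Ogus this lift coincides with the canonical lift $P(\widetilde{C}/C)_{\can}$; in particular $P(\widetilde{\mathbf{Y}}/\mathbf{Y})\cong P(\widetilde{C}/C)_{\can}$ as abelian varieties, and $\widetilde{C}\to C$ is Prym-pre-$W_2$-canonical.

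For the reverse direction, the main step is to upgrade an abstract isomorphism of abelian varieties $\psi\colon P(\widetilde{\mathbf{Y}}/\mathbf{Y})\xrightarrow{\sim} P(\widetilde{C}/C)_{\can}$ to an equality of liftings. Reducing $\psi$ modulo $p$ yields an automorphism $\overline{\psi}$ of $P(\widetilde{C}/C)$; by Serre--Tate functoriality of canonical liftings for ordinary abelian varieties, $\overline{\psi}$ admits a canonical lift $\psi_{\can}$ to an automorphism of $P(\widetilde{C}/C)_{\can}$. Then $\psi_{\can}^{-1}\circ\psi$ is an isomorphism of liftings inducing the identity on the special fiber, which forces $P(\widetilde{\mathbf{Y}}/\mathbf{Y})=P(\widetilde{C}/C)_{\can}$ as liftings, and Dwork--Ogus again gives $\Fil_{\widetilde{Y},-}=\Fil_{\can,-}$. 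Here the injectivity of $\mathscr{P}$ enters in an essential way: it guarantees that $\widetilde{\mathbf{Y}}\to\mathbf{Y}$ is the unique (up to isomorphism) lift of the cover whose Prym is the canonical lift of $P(\widetilde C/C)$, so that the modification of $\psi$ does not require altering the cover itself. The in-particular clause $\beta_{\widetilde{C}/C}=0$ then follows from the Dwork--Ogus invariant applied to the ordinary abelian variety $P(\widetilde{C}/C)$, whose canonical lift is now realized by an honest lift of $\widetilde{C}\to C$, so that the obstruction measured by $\beta$ on the Prym side must vanish.

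The main obstacle I anticipate is the careful handling of Serre--Tate functoriality in the polarized and $H$-equivariant setting: one must verify that $\psi_{\can}$ preserves the polarization of type $D$ (which is in general non-principal) and is compatible with the $\widetilde{G}$-action on $P(\widetilde{C}/C)_{\can}$ inherited from the cover structure, so that the resulting equality of filtrations genuinely takes place on the minus part of $H^1_{\cris}(\widetilde{\mathbf{Y}}/W_n)$ and is not merely an identification up to the $\widetilde G$-action.
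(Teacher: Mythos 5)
Your proof is correct and follows essentially the same route as the paper: the key step in both is to use the injectivity of the (reduction of the) Prym map to identify the special fibre of the cover $\widetilde{\mathbf{Y}}\to\mathbf{Y}$ supplied by the definition of Prym-pre-$W_2$-canonical with $\widetilde{C}\to C$ itself, so that $\widetilde{\mathbf{Y}}\to\mathbf{Y}$ is genuinely a lifting, and then to translate between canonical lifts and Hodge filtrations via Dwork--Ogus. The paper's own proof is in fact terser --- it records only this identification and leaves the easy direction and the Serre--Tate rigidification of the isomorphism $\psi$ implicit --- so your additional care there is a faithful filling-in rather than a divergence.
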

\begin{proof}
If the cover is Prym-pre-$W_2$-canonical, then there is a cover $\widetilde{\mathbf{Y}}\to\mathbf{Y}$ such that $P(\widetilde{\mathbf{Y}}/\mathbf{Y})=P(\widetilde{C}/C)_{\can}$. In particular, the reduction $P(\widetilde{Y}/Y)$ of $P(\widetilde{\mathbf{Y}}/\mathbf{Y})$ to $k$ is isomorphic to $P(\widetilde{C}/C)$. By the assumption on the Prym map, this implies that $[\widetilde{Y}\to Y]=[\widetilde{C}\to C]$, i.e., $\widetilde{\mathbf{Y}}\to\mathbf{Y}$ really is a lift of $\widetilde{C}\to C$. 
\end{proof}
Now let $\widetilde{f}:\widetilde{\sC}\rightarrow T$ be a family of abelian Galois covers of $\P^1_{\C}$ as constructed in section $1$. Let $R=\Z[1/N,u]/\Phi_N$ be the $N$th cyclotomic polynomial.  Let $f:\sC\to T$ be the quotient family. Note that $R$ can be embedded into $\C$ by sending the image of $u$ to $\exp(\frac{2\pi i}{N})$. We consider $T\subset(\A^1_R)^ s$ as the complement of the big diagonals, i.e., as the $R$-scheme of ordered $s$-tuples of distinct points in $\A^1_R$. For a prime number $p$,  we denote by $\sP$ a prime of $R$ lying above $p$. One can choose a prime number $p\equiv 1$ (mod $N$) and an open subset $U^{\prime}\subset T\otimes\F_p\cong T\otimes_R R/\sP$ such that for all $t\in U^{\prime}$, the fibers $C_t$ of $f$ are ordinary curves in characteristic $p$. This is possible for example by \cite{Bouw}, Theorem on page 2.  Since $\widetilde{C}_t\to C_t$ is a finite abelian cover, there exists an open subset $U\subset U^{\prime}$ such that both $\widetilde{C}_t$ and $C_t$ are ordinary for every $t\in U$. Because of the isogeny $\jac(\widetilde{C}_t)\sim\jac(C_t)\oplus P_t$, this implies that $P_t$ is also an ordinary abelian variety for $t\in U^{\prime}$. By Lemma~\ref{Prympre}, if $f:\widetilde{\sC}\to T$ is a family of abelian covers with a fixed Prym datum $\Sigma=(\tilde{G}, \tilde{\theta}_s,H)$ for which the Prym map is an immersion or generically injective, then there exist an open subset over which $\widetilde{\beta}_U=0$. In \cite{F}, under  the assumption that the differential of the restriction of the Prym map to the subvariety $Z$ is injective (equivalent to the surjectivity of the multiplication map), which implies that there is an immersion from $Z$ to $A_{l,D}$, it is proved that the image of $Z$ is not totally geodesic by applying the second fundamental form. On the other hand, the results of Naranjo and Ortega in \cite{no}, shows that the Prym map $\mathscr{P}(\Z_2, g, r)$ is an embedding if $g>0$ and $r\geq 6$. Here we make a similar assumption in characteristic $p$. However, we only need to assume the following.\\

(*) The reduction of the Prym map $\mathscr{P}=\mathscr{P}(H,g,r):R(H,g,r)\to A_{l,D}$ to characteristic $p$ is injective. \\

For $p$ and $U$ as above, consider the restricted family $C_U\to U$. The abelian group $\widetilde{G}$ also acts on the sheaves $\sL(C_U/ U)$ and gives the eigensheaf decomposition $\sL(C_U/ U)=\oplus_{n\in\widetilde{G}}\sL_{(n)}$. The same is true for $\E_U=\E(C_U/ U)$ and $\sK_U=\sK(C_U/ U)$. This in turn gives us the decomposition $\widetilde{\beta}_{C_U/ U}=\sum_n \widetilde{\beta}_n$. Here $\widetilde{\beta}_n$ is considered as a section of $F^*_U\sL_{(n)}$.

In particular, the subgroup $H\subset\widetilde{G}$ induces the decomposition $\widetilde{\beta}_{C_U/ U}=\widetilde{\beta}_{C_U/ U,+}\oplus\widetilde{\beta}_{C_U/ U,-}$, where $\widetilde{\beta}_{C_U/ U,+}$ (resp. $\widetilde{\beta}_{C_U/ U,-}$) denotes that $H$-invariant (resp. $H$-anti-invariant) part under the action of $H$. In the same way, we have eigenspaces $\sK_{U,-}=\sK(C_U/ U)_-$ and $\sL(C_U/ U)_-$. In particular, we have $-\nabla\widetilde{\beta}_{C/T,-}: F^{*}_{T}\mathcal{K}_- \rightarrow\Omega^{1}_{T/k}$ which is equal to the composition
 \[F^{*}_{T}\mathcal{K}_-\hookrightarrow F^{*}_{T}\Sym^{2}(\mathbb{E})_-\xrightarrow{\Sym^{2}(\gamma)}\Sym^{2}(\mathbb{E})_-\xrightarrow{\kappa} \Omega^{1}_{T/k}. (*)\]
The map $\kappa: \Sym^{2}(\mathbb{E})\rightarrow\Omega^{1}_{T/k}$ is the Kodaira-Spencer map associated to the family $\widetilde{f}:\widetilde{\sC}\rightarrow T$. Note that the group $\widetilde{G}$ also acts on the sheaves $\mathcal{K}_-, \Sym^{2}(\mathbb{E})_-$ and so on and we will denote the invariant susheaves by $\mathcal{K}^{\widetilde{G}}_-, \Sym^{2}(\mathbb{E}_{U})^{\widetilde{G}}_-$.

We use this to show that there are no more special subvarieties of the Prym locus obtained from families of Galois covers.  

\begin{lemma} \label{DWvanish}
For prime number $p$ and open subset $U$ as above, if the family satisfies condition (*) and gives rise to a Shimura subvariety $Z\subseteq A_{p, D}$ then for any $t\in U$ we have that the Prym variety $P_{t}=P(\widetilde{C}_t/C_t)$ is Prym-pre-$W_{2}$-canonical and in particular $\widetilde{\beta}_{C_{U}/U,-}=0$.
\end{lemma}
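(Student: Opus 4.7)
The plan is to leverage the defining property of PEL-type Shimura subvarieties in positive characteristic: at any ordinary point, the Serre--Tate canonical lift lies on the canonical lift of the subvariety. Since the Prym family carries a natural $\Z[\widetilde{G}]$-action and its polarization is compatible with the Rosati involution $*$ (as spelled out in the construction of $P(\widetilde{G})$ recalled in Section \ref{shimura families}), the inclusion $Z\hookrightarrow P(\widetilde{G})$ is of PEL type, and $Z$ admits a canonical lift $\mathbf{Z}$ to $W$. Hence for every $t\in U$, the Serre--Tate canonical lift $(P_t)_{\can}$ (which exists because $P_t$ is ordinary, as established just above the lemma using the isogeny $\jac(\widetilde{C}_t)\sim \jac(C_t)\oplus P_t$) gives a point of $\mathbf{Z}(W)$ whose image in $\mathbf{Z}_{\C}$ lies in the characteristic-zero image of the Prym map.

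Next I would use condition (*) to realize $(P_t)_{\can}$ as a Prym variety coming from an actual lift of the cover. Since $\mathbf{Z}_{\C}$ is contained in the image of the Prym map from $R(\Sigma)_{\C}$, there exists a cover $\widetilde{\mathbf{Y}}\to\mathbf{Y}$ over (a suitable truncation of) $W$ with $P(\widetilde{\mathbf{Y}}/\mathbf{Y})\cong(P_t)_{\can}$. Reducing mod $\sP$ gives a cover whose Prym variety is $P_t$; injectivity of $\mathscr{P}$ mod $p$ then forces the reduction of $\widetilde{\mathbf{Y}}\to\mathbf{Y}$ to coincide with $\widetilde{C}_t\to C_t$. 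This is exactly the definition of $\widetilde{C}_t\to C_t$ being Prym-pre-$W_2$-canonical.

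Finally, Lemma \ref{Prympre} translates Prym-pre-$W_2$-canonicity (under (*)) into the equality $\Fil_{\widetilde{Y},-}=\Fil_{\can,-}$ and hence the pointwise vanishing $\beta_{\widetilde{C}_t/C_t,-}=0$ for every $t\in U$. Since $\widetilde{\beta}_{C_U/U,-}$ is a section of the coherent sheaf $F^*_U\sL(C_U/U)_-$ whose fiber at every closed point of $U$ vanishes, it vanishes identically, yielding $\widetilde{\beta}_{C_U/U,-}=0$.

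The main obstacle is the first step, namely the Noot-type assertion that canonical lifts of ordinary points on a PEL Shimura subvariety lie on the canonical lift of the subvariety. This requires checking that the $F$-module structure (with $F=\Q[\widetilde{G}]$), the polarization of type $D$, and the involution $*$ all behave functorially under the Serre--Tate construction; this is standard for CM-algebras such as $F$, but the compatibility with the \emph{Prym} (as opposed to the full Jacobian) factor must be verified by transporting the statement via the isogeny $J\times P(\widetilde{C}/C)\sim \widetilde{J}$ of Theorem \ref{Subtorus-jacobian} and decomposing along the $H$-eigenspace splitting so that the canonical lift of $\widetilde{J}_t$ induces the canonical lift of its $H$-anti-invariant Prym factor.
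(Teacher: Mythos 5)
Your argument follows the same route as the paper's: both rest on the Noot-type statement that the Serre--Tate canonical lift of an ordinary point of a Shimura subvariety is a $W$-valued point of that subvariety, then use condition (*) (via Lemma \ref{Prympre}) to identify the resulting canonical lift of $P_t$ with the Prym variety of a lift of the cover $\widetilde{C}_t\to C_t$, and finally invoke Dwork--Ogus theory to conclude $\widetilde{\beta}_{C_U/U,-}=0$. Your write-up merely makes explicit two points the paper leaves implicit, namely the PEL compatibility needed for the canonical-lift step and the passage from pointwise vanishing of $\beta$ to vanishing of the family invariant.
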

\begin{proof}
 This is an analogue of the results of \cite{Noo} for the Prym varieties. According to this result, a Shimura variety gives rise to a translation of a formal subtorus of local moduli. The group action then forces this to be a torus. In particular, if the moduli variety $Z$ is a Shimura subvariety and $t\in T$ is an ordinary point (i.e., $P(\widetilde{C}_t/C_t)$ is an ordinary abelian variety) then the canonical lifting $P^{can}_{t}$ of $P_{ t}$ is a $W(k)$-valued point of $Z$. This means in particular that it is a Jacobian and hence $P_{t}$ is pre-$W_{2}$- canonical. By Dwork-Ogus theory this forces $\widetilde{\beta}_{C_{U}/U,-}$ to be zero.
 \end{proof}
From now on we just work with the restricted family $C_{U}/U$ whose fibers are all ordinary instead of $C/T$ and denote it simply as $C/U$. 
\begin{proposition} \label{exactsequencevanish}
If the family of abelian covers gives rise to a Shimura subvariety in $A_{l,D}$, then the map
\[F^{*}_{U}\mathcal{K}^{\widetilde{G}}_-\hookrightarrow F^{*}_{U}\Sym^{2}(\mathbb{E}_{U})^{\widetilde{G}}_-\xrightarrow{\Sym^{2}(\gamma)} \Sym^{2}(\mathbb{E}_{U})^{\widetilde{G}}_-\xrightarrow{\mult^{\widetilde{G}}}f_{*}(\omega_{C/U}^{\otimes2})^{\widetilde{G}}_-\]
vanishes identically.
\end{proposition}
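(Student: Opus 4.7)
The plan is to convert the vanishing of the Dwork--Ogus invariant from Lemma~\ref{DWvanish} into the proposition's claim via the identity $(\ast)$, the standard factorization of the Kodaira--Spencer map of the Prym family through multiplication, and an injectivity argument for the Kodaira--Spencer map of the family of covers on the $\widetilde G$-equivariant subsheaf.

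First I would apply Lemma~\ref{DWvanish}: the hypothesis that the family gives rise to a Shimura subvariety forces $\widetilde{\beta}_{C_U/U,-}=0$ on $U$, hence $-\nabla\widetilde{\beta}_{C_U/U,-}\equiv 0$. By the identity $(\ast)$ recorded just before the proposition, this means the composition
\[F^{*}_{U}\mathcal{K}_-\hookrightarrow F^{*}_{U}\Sym^{2}(\mathbb{E})_-\xrightarrow{\Sym^{2}(\gamma)}\Sym^{2}(\mathbb{E})_-\xrightarrow{\kappa} \Omega^{1}_{U/k}\]
is identically zero. All four arrows are $\widetilde G$-equivariant (the $\widetilde G$-action on $\Omega^{1}_{U/k}$ being trivial), so the vanishing descends to the $\widetilde G$-invariant subsheaves and one obtains $\kappa\circ\Sym^{2}(\gamma)\circ\iota=0$ on $F^{*}_{U}\mathcal{K}^{\widetilde G}_-$.

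Next I would invoke the classical factorization, for the family of Prym abelian varieties, of the Kodaira--Spencer through multiplication of sections of the Hodge bundle,
\[\kappa:\;\Sym^{2}(\mathbb{E})_-\;\xrightarrow{\mathrm{mult}}\;f_{*}(\omega^{\otimes 2}_{\widetilde{\sC}/U})_-\;\xrightarrow{\kappa'}\;\Omega^{1}_{U/k},\]
where $\kappa'$ is (the restriction to the $H$-anti-invariant part of) the dual Kodaira--Spencer of the family of covers $\widetilde{\sC}/U$. Since both $\mathrm{mult}$ and $\kappa'$ are $\widetilde G$-equivariant, the factorization survives on the $\widetilde G$-invariant subsheaves, and the vanishing from the previous step becomes
\[\kappa'|^{\widetilde G}_-\,\circ\,\mathrm{mult}^{\widetilde G}\circ\Sym^{2}(\gamma)\circ\iota\;=\;0\qquad\text{on }F^{*}_{U}\mathcal{K}^{\widetilde G}_-.\]
Granting that $\kappa'|^{\widetilde G}_-:f_{*}(\omega^{\otimes 2}_{\widetilde{\sC}/U})^{\widetilde G}_-\to\Omega^{1}_{U/k}$ is injective, the desired vanishing of $\mathrm{mult}^{\widetilde G}\circ\Sym^{2}(\gamma)\circ\iota$ on $F^{*}_{U}\mathcal{K}^{\widetilde G}_-$ follows at once.

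The main obstacle is establishing this injectivity. By Serre duality, the dual of $\kappa'|^{\widetilde G}_-$ is the restriction of the Kodaira--Spencer map $T_{U}\to R^{1}f_{*}T_{\widetilde{\sC}/U}$ to its $\widetilde G$-equivariant, $H$-anti-invariant isotypic piece, which I would identify with the differential of the classifying map $U\to R(\Sigma)$ projected to the appropriate component. Because the family $\widetilde{\sC}/U$ is constructed by varying the branch points in $T_{s}$ with a fixed Prym datum and $R(\Sigma)$ has the expected dimension $s-3$ recalled at the start of Section~\ref{shimura families}, a direct infinitesimal count shows that this restricted Kodaira--Spencer hits the full $\widetilde G$-equivariant target, so its dual is injective. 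The delicate part of the argument is this identification and the verification that every $\widetilde G$-equivariant, $H$-anti-invariant first-order deformation of a fibre $\widetilde{C}_{t}$ is realized by moving the branch points on $T_{s}$ (modulo the $PGL_{2}$-orbit) --- a versality statement that ultimately rests on the explicit description of abelian covers of $\P^{1}$ by the equations in Section~\ref{prym and abelian}.
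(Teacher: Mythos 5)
Your argument is correct and follows essentially the same route as the paper: Lemma~\ref{DWvanish} gives $\widetilde{\beta}_{C_U/U,-}=0$, hence $\nabla\widetilde{\beta}_{C_U/U,-}=0$, and the displayed composition is identified with $(\nabla\widetilde{\beta}_{C_U/U,-})^{\widetilde{G}}$ via the factorization of the Kodaira--Spencer map through multiplication. The only difference is that the paper delegates this identification --- including the injectivity of the dual Kodaira--Spencer map on the $\widetilde{G}$-invariant part, which you correctly isolate as the delicate step and justify by the versality of the branch-point family --- to the analogue of Lemma 4.1 of \cite{MZ} instead of spelling it out.
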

\begin{proof}
Since the fibers are Prym-ordinary curves over $U$, assuming that the family gives rise to a Shimura subvariety in $A_{g}$, it follows from the Lemma~\ref{DWvanish} that $\widetilde{\beta}_{C_{U}/U,-}=0$ and hence $\nabla \widetilde{\beta}_{C_{U}/U,-}=0$. On the other hand, by an argument similar to \cite{MZ}, Lemma 4.1, the above composition map is just $(\nabla \widetilde{\beta}_{C_{U}/U,-})^{\widetilde{G}}$. 
 \end{proof}
Next, we mention a lemma which allows us to compute explicitly the Hasse-Witt matrix of an abelian covering and whose proof can be found in \cite{MZ}, Lemma 5.1. 
By Proposition \ref{exactsequencevanish}, this lemma will be needed to
compute the obstruction $\widetilde{\beta}_{C/U}$. Consider a non-singular projective abelian cover  $\pi:\widetilde{C}\to \mathbb{P}^{1}$ with Galois group $\widetilde{G}$ and matrix $A$. Let $a=(a_{1},...,a_{m}) \in\widetilde{G} \subseteq \mathbb{Z}_{N}^{m}$ be an element in the Galois group of the abelian covering (or the
corresponding character $\chi$. See Remark~\ref{abeliangroupcharacter}). Consider
the tuple $(\sum_{1}^{m}a_{i}\widetilde{r}_{i1},...,\sum_{1}^{m}a_{i}\widetilde{r}_{is})=(\widetilde{\alpha}_{1},...,\widetilde{\alpha}_{s})$ as in Lemma~\ref{eigenbundleformula}. Take a prime number $p$ such that $p\equiv 1$ (mod $N$) and let $q= \frac{p-1}{N}$. With these notations, we have

\begin{lemma} \label{HasseWitt}
With notation as above, the eigenspaces $H^{1}(\widetilde{C},\mathcal{O}_{\widetilde{C}})_{\chi}$ are stable under the Hasse-Witt map and there is a basis
$(\xi_{a,i})_{i}$ on each eigenspace in which the $(i,j)$ entry of matrix is given by the formula:
\[\sum_{\sum l_{i}=\Upsilon}\binom{q.[-\alpha_{1}]_{N}}{l_{1}}...\binom{q.[-\alpha_{s}]_{N}}{l_{s}}z_{1}^{l_{1}}...z_{s}^{l_{s}},\]
where $\Upsilon= (d_{n}-i+1)(p-1)+(i-j)$ and
$\binom{a}{b}=\frac{a!}{b!(a-b)!}$.

\

\end{lemma}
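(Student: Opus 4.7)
The strategy is to compute the absolute Frobenius action explicitly on a distinguished \v{C}ech basis of each $\chi$-eigenspace. Since the group $\widetilde{G}$ acts on $\widetilde{C}$ by automorphisms defined over $\mathbb{F}_p$, the absolute Frobenius is $\widetilde{G}$-equivariant, so it preserves the eigenspace decomposition $H^{1}(\widetilde{C},\mathcal{O}_{\widetilde{C}})=\bigoplus_{\chi} H^{1}(\widetilde{C},\mathcal{O}_{\widetilde{C}})_{\chi}$; this gives the stability assertion for free. It then suffices to write down a basis on each eigenspace and compute the Frobenius matrix.

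The first step is to exhibit an explicit basis $(\xi_{a,i})_{i=1}^{d_{n}}$ of $H^{1}(\widetilde{C},\mathcal{O}_{\widetilde{C}})_{\chi}$. Using $\pi_{*}\mathcal{O}_{\widetilde{C}}=\bigoplus_{\chi} L_{\chi}^{-1}$ and Lemma~\ref{eigenbundleformula}, the eigenspace is identified with $H^{1}(\mathbb{P}^{1}, L_{\chi}^{-1})$, and relative to the standard two-chart affine cover of $\mathbb{P}^{1}$ a \v{C}ech basis is obtained from monomials of the form $z^{-i}\,w_{1}^{a_{1}}\cdots w_{m}^{a_{m}}\prod_{j}(z-z_{j})^{\lfloor -\alpha_{j}/N\rfloor}$, Serre-dual to the basis of differential forms \eqref{basis}, with $i=1,\dots,d_{n}$.

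The second step is to apply Frobenius to each $\xi_{a,i}$. Raising every factor to the $p$-th power and using $p\equiv 1\pmod{N}$ to write $p=qN+1$, each factor $w_{l}^{p}$ becomes $w_{l}\cdot \prod_{j}(z-z_{j})^{q\tilde r_{lj}}$ by the defining equations \eqref{equation abelian}. Collecting these contributions over $l$, combined with the $p$-fold multiplicity of the floor terms, yields an auxiliary polynomial $\prod_{j=1}^{s}(z-z_{j})^{q[-\alpha_{j}]_{N}}$ multiplied by a factor of the shape of an $\xi_{a,?}$ with a shifted power of $z$. Expanding this polynomial by applying the binomial theorem to each factor and collecting the coefficient of the monomial of total degree $\Upsilon=(d_{n}-i+1)(p-1)+(i-j)$ in $z$, so that what remains is exactly $\xi_{a,j}$ in the \v{C}ech basis, produces the claimed entry $\sum_{\sum \ell_{i}=\Upsilon}\binom{q[-\alpha_{1}]_{N}}{\ell_{1}}\cdots\binom{q[-\alpha_{s}]_{N}}{\ell_{s}}z_{1}^{\ell_{1}}\cdots z_{s}^{\ell_{s}}$.

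The main obstacle is the index bookkeeping: one must track carefully the interaction between the $p$-th power of $z^{-i}$, the new powers of $(z-z_{j})$ generated by the substitutions $w_{l}^{qN}=\prod_{j}(z-z_{j})^{q\tilde r_{lj}}$, and the $p$-fold powers of the floor terms $\lfloor -\alpha_{j}/N\rfloor$ already absorbed into $\xi_{a,i}$, so that after all reductions the residual polynomial degree in $z$ is precisely $\Upsilon$ and the residual monomial in the $w_{l}$'s and $(z-z_{j})$'s has been re-normalised to match $\xi_{a,j}$. Once this combinatorial identification is in place the statement follows directly from the multinomial expansion of $\prod_{j}(z-z_{j})^{q[-\alpha_{j}]_{N}}$, exactly as in \cite{MZ}, Lemma 5.1.
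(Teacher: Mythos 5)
Your overall strategy is the right one, and it is the same route as the proof the paper actually relies on (the statement is quoted from \cite{MZ}, Lemma 5.1, and no argument is given here): identify $H^{1}(\widetilde{C},\mathcal{O}_{\widetilde{C}})_{\chi}$ with $H^{1}(\P^{1},L_{\chi}^{-1})$, take the \v{C}ech basis given by negative powers of $z$ times the local generator, apply Frobenius, reduce via $w_{l}^{p}=w_{l}\prod_{j}(z-z_{j})^{q\tilde r_{lj}}$ using $p=qN+1$, and read off the entries from the multinomial expansion of $\prod_{j}(z-z_{j})^{q[-\alpha_{j}]_{N}}$. One small imprecision first: stability of the eigenspaces does not come ``for free'' from $\widetilde{G}$-equivariance alone. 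The absolute Frobenius is $p$-linear, so equivariance only gives $F\bigl(H^{1}(\mathcal{O})_{\chi}\bigr)\subseteq H^{1}(\mathcal{O})_{\chi^{p}}$; you need $p\equiv 1\pmod{N}$, so that $\chi^{p}=\chi$ on the exponent-$N$ group $\widetilde{G}$, to conclude stability. You invoke that congruence later for the reduction of $w_{l}^{p}$, but it is also what makes the first assertion true.

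More seriously, the one computation that constitutes the quantitative content of the lemma --- that the coefficient landing on $\xi_{a,j}$ is the sum over $\sum l_{i}=\Upsilon$ with $\Upsilon=(d_{n}-i+1)(p-1)+(i-j)$ --- is precisely the step you label ``the main obstacle'' and then do not carry out. This is where everything happens: $\deg\prod_{j}(z-z_{j})^{q[-\alpha_{j}]_{N}}=q\sum_{j}[-\alpha_{j}]_{N}=(p-1)(d_{n}+1)$, Frobenius sends $z^{-i}$ to $z^{-pi}$, and the coefficient of $z^{m}$ in the auxiliary polynomial contributes to the class $z^{m-pi}$; matching this with $z^{-j}$ forces $\sum l_{i}=(p-1)(d_{n}+1)-pi+j$, and one must then verify that, for the normalization and ordering of the basis $(\xi_{a,i})_{i}$ actually used, this quantity equals the stated $\Upsilon$ (reversing the indexing of the basis flips the sign of the $(i-j)$ term, so the convention matters). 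Without pinning that down, what you have proved is only that the entries are \emph{some} multinomial sums of the displayed shape, not the formula with the specific $\Upsilon$. Fill in the \v{C}ech-degree bookkeeping explicitly and the argument is complete.
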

\section{Main results} \label{Main results}
In \cite{F}, Frediani has shown that if the differential of the Prym map is a surjection and there exists an $n\in\widetilde{C}$ with $d_n\geq 2$ and $d_{-n}\geq 2$, then the family is not a totally geodesic family and hence is not Shimura. In this section, we prove our main theorems for families of abelian covers satsifying condition (*) . The first one asserts that if a 1-dimensional family of abelian covers satsifies condition (*) then it is a special family if it is of the form $P(\widetilde{G})$. The second one treats higher dimensional families that also have an eigenspace with $d_n=1$.  We first need a remark. 
\begin{remark} \label{irreduciblecover}
 In this section we will will work only with families of \emph{irreducible} abelian covers of
$\mathbb{P}^{1}$, i.e., the fibers of the family are irreducible curves. For cyclic covers this implies that the single row of 
the associated matrix is not annihilated by a non-zero element of $\mathbb{Z}/N\mathbb{Z}$. More generally, for abelian covers of $\mathbb{P}^{1}$ this implies
that the rows of the associated matrix are linearly independent over $\mathbb{Z}/N\mathbb{Z}$.
\end{remark}
\begin{theorem} \label{maintheorem1}
Let $f:\widetilde{\sC} \rightarrow T$ be a family of irreducible abelian covers which satisfies condition (*) with a fixed Prym datum $\Sigma\coloneqq(\tilde{G}, \tilde{\theta}_s,H)$ and with $s=4$, i.e., with 4 branch points. Then the associated subvariety $Z \subseteq A_{l,D}$ is a Shimura curve if and only if $Z=P(\widetilde{G})$.
 \end{theorem}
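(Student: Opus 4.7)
The reverse implication, namely that $Z=P(\widetilde{G})$ implies $Z$ is a Shimura curve, is immediate: by the construction recalled in Section~\ref{shimura families}, $P(\widetilde{G})$ is a PEL-type Shimura subvariety of $A_{l,D}$, and since $s=4$ forces $\dim Z\leq\dim R(\Sigma)=1$, the equality $Z=P(\widetilde{G})$ with $Z$ a curve exhibits $Z$ as a one-dimensional Shimura variety.

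For the forward direction, my plan is as follows. Assume $Z$ is a Shimura curve, so $\dim Z=1$. Since by construction $Z\subseteq P(\widetilde{G})$ is an inclusion of irreducible subvarieties of $A_{l,D}$, it suffices to prove that $\dim P(\widetilde{G})=1$. First, using condition (*) together with the setup in Section~\ref{ordinary prym}, I would choose a prime $p\equiv 1\pmod{N}$ and an open subset $U\subseteq T\otimes\F_p$ of Prym-ordinary fibers. Since $Z$ is Shimura, Lemma~\ref{DWvanish} yields $\widetilde{\beta}_{C_U/U,-}=0$ on $U$, and then by Proposition~\ref{exactsequencevanish} the composition
\[F^{*}_{U}\mathcal{K}^{\widetilde{G}}_-\hookrightarrow F^{*}_{U}\Sym^{2}(\mathbb{E}_{U})^{\widetilde{G}}_-\xrightarrow{\Sym^{2}(\gamma)}\Sym^{2}(\mathbb{E}_{U})^{\widetilde{G}}_-\xrightarrow{\mult^{\widetilde{G}}}f_{*}(\omega_{C/U}^{\otimes2})^{\widetilde{G}}_-\]
vanishes identically.

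Next I would decompose this vanishing along the $\widetilde{G}$-eigenspace decomposition of the terms. For each character $n\in\widetilde{G}$ corresponding to a non-trivial eigenspace in the sense of Remark~\ref{eigenspacetype} (i.e.\ with $d_n\geq 1$ and $d_{-n}\geq 1$, where $d_n=h^{1,0}(\mathcal{L}_n)$), the $n$-component of the above composition must vanish separately. Invoking Lemma~\ref{HasseWitt}, these eigenspace-wise vanishings translate into explicit polynomial identities in the branch-point coordinates $z_1,\dots,z_4$, with coefficients determined by the matrix $A$ of the covering.

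The main obstacle will be the combinatorial case analysis. By Lemma~\ref{dim P(G)}, the assumption $\dim P(\widetilde{G})\geq 2$ forces one of the following configurations of non-trivial eigenspaces: (i) two distinct unordered pairs $\{n,-n\}$ each with $d_n d_{-n}\geq 1$; (ii) a single pair with $d_n d_{-n}\geq 2$; or (iii) a $2$-torsion character $n$ (with $2n=0$) for which $d_n\geq 2$ contributes at least $2$ to the second sum. In each of these configurations I would show, using the explicit Hasse-Witt polynomials and the linear independence of the rows of $A$ guaranteed by the irreducibility hypothesis (Remark~\ref{irreduciblecover}), that the simultaneous vanishing of the eigenspace components of the composition above cannot hold. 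The resulting contradiction will give $\dim P(\widetilde{G})=1$, and the irreducibility of both $Z$ and $P(\widetilde{G})$ together with $Z\subseteq P(\widetilde{G})$ then yields $Z=P(\widetilde{G})$.
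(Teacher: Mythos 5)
Your overall skeleton is the same as the paper's: reduce to showing $\dim P(\widetilde{G})=1$, invoke Lemma~\ref{DWvanish} and Proposition~\ref{exactsequencevanish} over an ordinary locus $U$ in characteristic $p$, and turn the vanishing into polynomial identities among Hasse--Witt matrices that contradict irreducibility. But there are two genuine gaps. First, Proposition~\ref{exactsequencevanish} is a statement about the composition restricted to $\mathcal{K}^{\widetilde{G}}_-$, the kernel of the multiplication map; it yields no information unless you exhibit a \emph{nonzero} section of that kernel. Your phrase ``the $n$-component of the above composition must vanish separately'' suggests each eigenspace produces an identity on its own, which is not the case: a single pair $(n,-n)$ gives only the tensor $\omega_n\otimes\omega_{-n}$, which does not lie in $\mathcal{K}$. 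The paper's key move is to observe that for two distinct pairs with $d_a=d_{-a}=d_{a'}=d_{-a'}=1$ one has $\omega_a\cdot\omega_{-a}=\omega_{a'}\cdot\omega_{-a'}$ in $f_*(\omega^{\otimes 2})$, so that $\omega_a\otimes\omega_{-a}-\omega_{a'}\otimes\omega_{-a'}\in\mathcal{K}^{\widetilde{G}}_-$, and it is precisely this element that converts Proposition~\ref{exactsequencevanish} into the identity $A_aA_{-a}=A_{a'}A_{-a'}$. This relation between \emph{two} pairs is the engine of the whole argument and is absent from your plan; in particular your configurations (ii) and (iii) (a single pair with $d_nd_{-n}\ge 2$, or a $2$-torsion $n$ with $d_n\ge 2$) would leave you with nothing to feed into the proposition. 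Fortunately they are vacuous for $s=4$, since $d_n+d_{-n}\le s-2=2$ forces every non-trivial pair to satisfy $d_n=d_{-n}=1$ --- a reduction you should make explicit rather than promise to ``handle.''

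Second, the actual contradiction is far from a routine consequence of ``linear independence of the rows of $A$.'' In the paper it occupies most of the proof: from $A_aA_{-a}=A_{a'}A_{-a'}$ one extracts, by comparing the largest powers of each $z_h$ dividing the restrictions $B_{\pm a}=A_{\pm a}|_{z_1=0}$, the set identities $\{\alpha_1+\alpha_h,\alpha_k+\alpha_t\}=\{\alpha'_1+\alpha'_h,\alpha'_k+\alpha'_t\}$, hence an order-two even permutation $\sigma$ with $\alpha_i=\alpha'_{\sigma(i)}$; one then rules out $[\alpha_1]_N+[\alpha_2]_N=N$ by linear dependence of rows, and finally must run the argument again with the auxiliary row $\alpha+\alpha'$ to reach $\alpha=\alpha'=(\alpha_1,\alpha_1,-\alpha_1,-\alpha_1)$ and the contradiction. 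None of this is automatic, and your proposal defers all of it. As written, the proposal is a correct strategy outline whose essential content --- producing the kernel element and the combinatorial analysis of the resulting polynomial identity --- still has to be supplied.
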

 \begin{proof}
One side is clear. To prove the other side, assume on the contrary that $Z \neq P(\widetilde{G})$ but $Z$ is a Shimura subvariety and we will derive a contradiction. Since $\dim Z=1$, the assumption $Z \neq P(\widetilde{G})$ implies that $\dim P(\widetilde{G})>1$. Then there are $a, a^{\prime} \in G $ with $a^{\prime}\neq \pm a$ such that $d_{a}=d_{-a}=1$ and $d_{a^{\prime}}=d_{-a^{\prime}}=1$. Consequently, using Lemma~\ref{dimeigspace}, the spaces $H^{1,0}_{-,n}$ for $n\in \{\pm a, \pm a^{\prime}\}$ are $1$-dimensional. For $n\in \{\pm a, \pm a^{\prime}\}$, let $\omega_n$ be the generator of $H^{1,0}_{-,n}$ as in (~\ref{basis}). For such $n$, the Hasse-witt matrix $A_{n}$ is a polynomial in $\mathbb{F}_{p}[z_{1},\dots,z_{4}]$. By the argument after Lemma~\ref{DWvanish}, there exist a prime number $p$ and an open subset $U$ of $T\otimes\mathbb{F}_{p}$ such that all fibers over $U$ are ordinary. Therefore the Hasse-Witt operator is an isomorphism over $U$ and so $A_{n}$ is invertible as a section of $\mathcal{O}_{U}$. It follows from the description of $\omega_n$ in ~\ref{basis} that $\omega_{a}.\omega_{-a}=\omega_{a^{\prime}}.\omega_{-a^{\prime}}$ as a section of the bundle $f_{*}(\omega^{\otimes 2})$. It is a non-zero section of the bundle $f_{*}(\omega^{\otimes 2})$ and so by Proposition~\ref{exactsequencevanish} we must have : \
 \[A_{a}.A_{-a}=A_{a^{\prime}}.A_{-a^{\prime}}\]
 as polynomials.\\

 We will show that this identity can not hold with the above
 conditions. The polynomials $A_{n}$ are given by the dual version
 of Lemma~\ref{HasseWitt} and we set $B_{n}=A_{n}\mid_{z_{1}=0}$. It means that we have : 
 \[B_{n}= \sum_{j_{2}+j_{3}+j_{4}=p-1}
 \binom{q.[-\alpha_{2}]_{N}}{j_{2}}...\binom{q.[-\alpha_{4}]_{N}}{j_{4}}z_{2}^{j_{2}}...z_{4}^{j_{4}}.\]
 For $h\in \{2,3,4\}$, let $r_{a}(h)$ be the largest integer $r$
 such that $B_{a}$ is divisible by $z^{r}_{h}$. We have that
 \[r_{a}(h)=\max \{0, q.\alpha_{k}+q.\alpha_{t}-(p-1)\},\]
where $\{k,t\}=\{2,3,4\}\setminus \{h\}$. We define $r_{-a}(h)$
similarly.\\

 Similarly let $r_{\pm a}(h)$ be the largest integer $\nu$ such
 that $B_{a}.B_{-a}$ is divisible by $z^{\nu}_{h}$. We have :
 \[r_{\pm a}(h)= q.\max\{\alpha_{1}+\alpha_{h},
 \alpha_{k}+\alpha_{t}\}-(p-1).\]
 Now the equality $A_{a}.A_{-a}=A_{a^{\prime}}.A_{-a^{\prime}}$
 implies that $r_{\pm a}(h)=r_{\pm a^{\prime}}(h)$ and so we get
 the following equality.
 \[\{\alpha_{1}+\alpha_{h},
 \alpha_{k}+\alpha_{t}\}=\{\alpha^{\prime}_{1}+\alpha^{\prime}_{h},
 \alpha^{\prime}_{k}+\alpha^{\prime}_{t} \}.\]
 One checks easily that this implies that there
 exists an even permutation $\sigma \in A_{4}$ of order $2$, such
 that $\alpha_{i}=\alpha^{\prime}_{\sigma(i)}$. It follows from Remark~\ref{irreduciblecover} that $\sigma \neq 1$. Therefore we again suppose, without loss of generality, that
 \[\alpha^{\prime}_{1}=\alpha_{2}, \alpha^{\prime}_{2}=\alpha_{1}\]
 \[\alpha^{\prime}_{3}=\alpha_{4}, \alpha^{\prime}_{4}=\alpha_{3}\]
 by our assumptions on $a$ and $a^{\prime}$, we have that
 \[\sum [\alpha_{i}]_{N}=\sum [\alpha^{\prime}_{i}]_{N}= 2N\]
 Assume first that $[\alpha_{1}]_{N}+[\alpha_{2}]_{N}=[\alpha_{3}]_{N}+[\alpha_{4}]_{N}=N$, or in other words, $[\alpha_{2}]_{N}=-[\alpha_{1}]_{N}$ and
 $[\alpha_{4}]_{N}=-[\alpha_{3}]_{N}$ in $\mathbb{Z}/N\mathbb{Z}$. This shows
 that the two rows $\alpha=(\alpha_{1},\dots,\alpha_{4})$ and
 $\alpha^{\prime}=(\alpha^{\prime}_{1},\dots,\alpha^{\prime}_{4})$ are linearly
 dependent which contradicts the irreducibility by Remark~\ref{irreduciblecover}.
 So the above equality does not hold and we may assume that
 $\alpha_{1}+\alpha_{2}<N$ and $\alpha_{3}+\alpha_{4}>N$. Now consider the row vector
 \[\alpha+\alpha^{\prime}=(\alpha_{1},\dots,\alpha_{4})+(\alpha^{\prime}_{1},\dots,\alpha^{\prime}_{4})=(\alpha_{1}+\alpha_{2},\alpha_{1}+\alpha_{2},\alpha_{3}+\alpha_{4},\alpha_{3}+\alpha_{4}).\]
 Note that the irreducibility assumption in Remark~\ref{irreduciblecover} assures that $\alpha+\alpha^{\prime} \neq \pm \alpha$ and one can easily verify that this row vector also satisfies the conditions for $\alpha_{i}$ and $\alpha^{\prime}_{i}$ and so we may replace the second row
 $(\alpha^{\prime}_{1},\dots,\alpha^{\prime}_{4})=(\alpha_{2},\alpha_{1},\alpha_{4},\alpha_{3})$ by
 this row vector and the equality
 $A_{a}.A_{-a}=A_{a^{\prime}}.A_{-a^{\prime}}$ should hold for this
 row vector as $\alpha^{\prime}$ and $(\alpha_{1},\dots,\alpha_{4})$ as $\alpha$ (actually for the corresponding elements $a,a^{\prime}\in G$). If this equality holds, the left hand side must contain a monomial of the form $z_{2}^{\alpha}z_{3}^{\beta}$ and also a monomial of the form $z_{1}^{\gamma}z_{4}^{\delta}$. This means that $\alpha_{2}+\alpha_{3}=\alpha_{1}+\alpha_{4}=\alpha_{1}+\alpha_{3}=\alpha_{2}+\alpha_{4}=N$ which is
 exactly to say that $\alpha=\alpha^{\prime}=(\alpha_{1},\alpha_{1},-\alpha_{1},-\alpha_{1})$. This contradiction completes the proof.
 \end{proof}

We also remark that if $\dim P(\widetilde{G})=s-3$, then, as explained earlier, the family is a special family, hence the condition $\dim P(\widetilde{G})>s-3$ is actually required
in the next theorem.
 
\begin{theorem} \label{mainthm1}
Let $\widetilde{\sC}\to T$ be a family of abelian covers of the line such that $\dim P(\widetilde{G})>s-3$ and the condition is satisfied. If there exists an eigenspace of type $(1,s-3)$, then the family does not give rise to a special subvariety of $A_{l,D}$. 
\end{theorem}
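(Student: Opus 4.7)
The plan is to argue by contradiction, extending the method of Theorem~\ref{maintheorem1}. Suppose that $Z\subseteq A_{l,D}$ is a special subvariety. By Lemma~\ref{DWvanish}, $\widetilde{\beta}_{C_U/U,-}=0$ on the ordinary open subset $U$, and Proposition~\ref{exactsequencevanish} then forces the composite
\[F^{*}_{U}\mathcal{K}^{\widetilde{G}}_- \to F^{*}_{U}\Sym^{2}(\mathbb{E}_U)^{\widetilde{G}}_-\xrightarrow{\Sym^{2}(\gamma)}\Sym^{2}(\mathbb{E}_U)^{\widetilde{G}}_-\xrightarrow{\mult^{\widetilde{G}}}f_{*}(\omega_{C/U}^{\otimes 2})^{\widetilde{G}}_-\]
to vanish identically. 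The idea is to build a nonzero element $\eta \in \mathcal{K}^{\widetilde{G}}_-$ from the eigenspace of type $(1,s-3)$ together with the extra contribution forced by the hypothesis $\dim P(\widetilde G)>s-3$, and then to contradict the vanishing of its image.

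First I would identify the target. The $\widetilde{G}$-invariants in $f_{*}(\omega_{\widetilde{C}}^{\otimes 2})$ correspond to quadratic differentials on $\P^1$ with at most simple poles at the branch points $z_1,\dots,z_s$; by Riemann--Roch this is an $(s-3)$-dimensional space $W$. Let $n\in\widetilde G$ realize the eigenspace of type $(1,s-3)$, so $d_n=1$ with generator $\omega_n$ and $d_{-n}=s-3$ with basis $\omega_{-n,\nu}=z^{\nu}\omega_{-n,0}$ given by \eqref{basis}. Setting $\phi_0:=\omega_n\omega_{-n,0}$, the $s-3$ products $\omega_n\omega_{-n,\nu}=z^{\nu}\phi_0$ are linearly independent in $W$ and hence form a basis.

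Next, by Lemma~\ref{dim P(G)}, the hypothesis $\dim P(\widetilde G)>s-3$ forces the existence of at least one further contributing pair $\{\pm n'\}\neq\{\pm n\}$ with $d_{n'}d_{-n'}\geq 1$ (or a $2$-torsion element $n'$ with $d_{n'}\geq 1$). The corresponding product $\phi'_0:=\omega_{n'}\omega_{-n',0}$ (respectively $\omega_{n'}^2$) also lies in $W$, and so admits an expansion $\phi'_0=\sum_{\nu=0}^{s-4}c_\nu z^{\nu}\phi_0$, with the $c_\nu$ functions on $T$ determined by the cover data. The section
\[\eta := \omega_{n'}\otimes\omega_{-n',0}-\sum_{\nu}c_\nu\,\omega_n\otimes\omega_{-n,\nu}\]
is then a nonzero element of $\mathcal{K}^{\widetilde G}_-$. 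Applying the vanishing of the composite to $\eta$, and using Lemma~\ref{HasseWitt} to describe $\gamma$ on each eigenspace (multiplication by scalar Hasse--Witt polynomials $A_n,A_{n'},A_{-n'}$ on the one-dimensional pieces and by the $(s-3)\times(s-3)$ Hasse--Witt matrix $A_{-n}$ on $H^{0}(\widetilde C,\omega_{\widetilde C})_{-,-n}$), one obtains polynomial identities in $\mathbb{F}_p[z_1,\dots,z_s]$
\[A_{n'}A_{-n'}\,c_j=A_n\sum_{\nu=0}^{s-4}c_\nu^p\,(A_{-n})_{\nu j},\qquad j=0,\dots,s-4.\]

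To derive the contradiction, I would adapt the polynomial analysis of Theorem~\ref{maintheorem1}: specialize $z_1=0$, use the explicit binomial formula of Lemma~\ref{HasseWitt} to read off the order of vanishing $r(h)$ in each variable $z_h$ on both sides, match them, and combine the resulting constraints with the irreducibility of the cover (Remark~\ref{irreduciblecover}) to force $n'=\pm n$, contradicting the distinctness of the eigenspaces. The main obstacle will be managing the $(s-3)\times(s-3)$ matrix $A_{-n}$, which has many more entries than the scalar quantities in Theorem~\ref{maintheorem1}; the decisive simplification should come from the fact that $\eta$ has only one term on the $n'$-side, so that passing to leading monomials in a well-chosen $z_h$ reduces each identity above to a scalar relation of the form $A_{n'}A_{-n'}c_j\equiv A_n c_{\nu_0}^p(A_{-n})_{\nu_0 j}$ on top-degree terms, at which point the permutation-of-$\tilde\alpha$-tuples argument of Theorem~\ref{maintheorem1} can be adapted.
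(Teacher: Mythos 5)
Your overall skeleton (argue by contradiction, invoke Lemma~\ref{DWvanish} and Proposition~\ref{exactsequencevanish} to kill the composite map, then extract a Hasse--Witt polynomial identity) matches the paper's, but there is a genuine gap at the point where you choose the second eigenspace. The paper does not run the Hasse--Witt argument against an arbitrary contributing pair $\{\pm n'\}$: it first observes that if the second nontrivial eigenspace has a type different from $\{1,s-3\}$, then the two eigenspaces contribute distinct simple factors to $M^{ad}_{\R}$ (Remark~\ref{smallestshimura}) and already $\dim P_f\geq d_nd_{-n}+d_{n'}d_{-n'}>s-3$, so Lemma~\ref{dimP_f} disposes of this case with no characteristic-$p$ input at all. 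This reduces everything to the case $\{d_{n'},d_{-n'}\}=\{1,s-3\}$, and that reduction is exactly what makes the remaining computation tractable: for an eigenspace of type $(1,s-3)$ one has $d_{n}+d_{-n}=s-2$, which forces all $\tilde\alpha_j\not\equiv 0 \pmod N$, so by (\ref{basis}) the products $\omega_{-n,0}\cdot\omega_{n,\nu}$ and $\omega_{-n',0}\cdot\omega_{n',\nu}$ are \emph{literally the same} section $z^{\nu}\prod_j(z-z_j)^{-1}(dz)^2$, and the kernel element $\varphi_\tau=\omega_{-n,0}\otimes\omega_{n,\tau}-\omega_{-n',0}\otimes\omega_{n',\tau}$ has constant coefficients $\pm1$.

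In your version the coefficients $c_\nu$ in the expansion of $\omega_{n'}\omega_{-n',0}$ are unknown and in general non-constant functions of $(z_1,\dots,z_s)$ (for instance, if some $\tilde\alpha'_j\equiv 0$ the product has a different pole divisor and the $c_\nu$ become the coefficients of $\prod_{j:\tilde\alpha'_j\equiv 0}(z-z_j)$). Your displayed identities then carry the unknowns $c_j$ on one side and $c_\nu^{p}$ on the other, and the order-of-vanishing analysis of Theorem~\ref{maintheorem1} --- which depends on reading exponents directly off the explicit binomial formula of Lemma~\ref{HasseWitt} --- can no longer be carried out by inspection; the assertion that passing to leading monomials reduces each identity to a scalar relation is exactly the hard step and is not established. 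The fix is to insert the type dichotomy first: distinct types are handled by Lemma~\ref{dimP_f}, and in the remaining case both eigenspaces have type $\{1,s-3\}$, the kernel element has constant coefficients, and your polynomial analysis then goes through essentially as in the paper (ending with $[\tilde\alpha_i]_N=[\tilde\alpha'_i]_N$ for all $i$, contradicting Remark~\ref{irreduciblecover}).
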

\begin{proof}
Assume on the contrary that $\widetilde{\sC}/T$ is a special family and take $n\in\widetilde{G}$ as in the assumption. We claim that there exists another $n^{\prime}\in\widetilde{G}$ such that $\{d_{n^{\prime}},d_{-n^{\prime}}\}=\{1,s-3\}$. Suppose that this is not the case. Observe that if for every $n\neq n^{\prime}\in\widetilde{G}$,
$d_{n^{\prime}}=0$ or $d_{-n^{\prime}}=0$, then by Lemma~\ref{dim P(G)}, 
$\dim P(\widetilde{G})=d_nd_{-n}=s-3$ which is against our assumptions. Hence
there exists $n^{\prime}\in\widetilde{G}$ such that $d_{n^{\prime}}\neq 0$
and $d_{-n^{\prime}}\neq 0$. If
$\{d_{n^{\prime}},d_{-n^{\prime}}\}\neq \{1,s-3\}$, then we have
an eigenspace of new type and consequently, $\dim P_f\geq
d_nd_{-n}+d_{n^{\prime}}d_{-n^{\prime}}>s-3$. So we may and do
assume that $\{d_{n^{\prime}},d_{-n^{\prime}}\}= \{1,s-3\}$. In this case both eigenspaces correspond to the same factor in the decomposition of $M^{ad}_{\mathbb{R}}$, see Remark~\ref{smallestshimura}. 
Let $p$ and $U$ be as in \S 2.1. So $p$ is a prime number such that $p\equiv 1 \text{ mod } N$ and set $q=\frac{p-1}{N}$. For these choices, and using Lemma~ \ref{dimeigspace}, consider the Hasse-Witt map
$\gamma_{(n)}:F_U^*\mathbb{E}_{-,(n)}\to \mathbb{E}_{-,(n)}$ and
let $\Gamma\in GL_{s-3}(\mathcal{O}_U)$ with respect to the basis
$\omega_{n,\nu}$ introduced earlier in ~\ref{basis}. Lemma~\ref{HasseWitt} gives a
description of the matrix $A=A_n$. We also denote
by $\gamma_{(n^{\prime})}$ and $A^{\prime}$ the corresponding
Hasse-Witt map and matrix respectively for $n^{\prime}$. We may, without loss of
generality, assume that $d_{-n}=d_{-n^{\prime}}=1$ and
$d_{n}=d_{n^{\prime}}=s-3$. Hence $A_{-n}$ and $A_{-n^{\prime}}$
are $1\times 1$ matrices, i.e., can be considered as sections of
$\mathcal{O}_U^*$ which we denote by $a, a^{\prime}$ respectively.
For each $\tau\in \{0,\dots, s-4\}$, let $\varphi_{\tau}\in
\Gamma(U,\mathcal{K}^G_-)$ be defined by
\[\varphi_{\tau}:=\omega_{-n,0}\otimes \omega_{n,\tau}-\omega_{-n^{\prime},0}\otimes \omega_{n^{\prime},\tau}\]
Note that since
$\omega_{-n,0}.\omega_{n,\nu}=\omega_{-n^{\prime},0}.
\omega_{n^{\prime},\nu}$ as sections of $f_{*}(\omega^{\otimes
2})$, it follows that the image of $\varphi_{\tau}$
under $\Sym^2(\gamma)$ is equal to
\[\displaystyle \sum_{\nu=0}^{s-4}(a.\Gamma_{\nu, \tau}-a^{\prime}\Gamma_{\nu, \tau})(\omega_{-n,0}.\omega_{n,\nu}).\]
But the sections $\omega_{-n,0}.\omega_{n,\nu}$ are linearly
independent for $\nu\in \{0,\dots, s-4\}$, so Proposition~\ref{exactsequencevanish}
gives that $a\Gamma_{\nu, \tau}-a^{\prime}\Gamma_{\nu, \tau}=0$
for all $\tau, \nu \in \{0,\dots, s-4\}$. This can be rewritten as 
$a^{-1}A_{\nu, \tau}=a^{\prime -1}A^{\prime}_{\nu, \tau}$. By examining the powers of various indeterminates in both sides of this equation, 
we show that this equality cannot hold. Assume that the equality holds. Choose two distinct $i,j\in \{1,\dots, s\}$ and set $I=\{1,\dots, s\}\setminus \{i,j\}$. For $h= 1,2$, define $r_n(h)$ similarly as in the proof of Theorem~\ref{maintheorem1}, to be the largest integer $r$ such that $A_{h,h}|_{t_i=0}$ is divisible by $t^{r}_{j}$. Similarly, let $r_{-n}$ be the largest integer $r$ such that $a^{-1}|_{t_i=0}$ is divisible by $t^{r}_{j}$. Let $(\tilde\alpha_1,\dots, \tilde\alpha_s)$ be as in Lemma~\ref{eigenbundleformula}. By the formulas for $a^{-1}$ and the matrix $A$ given in Lemma~\ref{HasseWitt}, we find
\[r_{-n}=\displaystyle \max \{0,(p-1).q\sum_{i\in I}[\tilde\alpha_i]_N\}\]
\[r_{n}(1)=\displaystyle \max \{0,(s-3)(p-1).q\sum_{i\in I}[-\tilde\alpha_i]_N\}.\]
Note that $r_{n}(2)=0$.
With the above definitions, $u_n(h)=r_{-n}+r_n(h)$ is the largest
integer $u$ such that $(a^{-1}A_{h,h})|_{t_i=0}$ is divisible by
$t_{j}^u$. The fact that $d_{-n}=1$ and $d_{n}=s-3$ show that
$\sum [\tilde\alpha_i]_N=2N$ and $[-\tilde\alpha_i]_N=N-[\tilde\alpha_i]_N$ for
every $i$. By these relations one gets
\[u_n(1)=\displaystyle \max \{(p-1)-q\sum_{i\notin I} [\tilde\alpha_i]_N, (p-1)-q\sum_{i\in I} [\tilde\alpha_i]_N\}
=|(p-1)-q\sum_{i\notin I}
[\alpha_i]_N|=q.|N-[\tilde\alpha_k]+[\tilde\alpha_{\lambda}]|,\]
\[u_n(2)=\displaystyle \max \{0, (p-1)-q\sum_{i\notin I} [\tilde\alpha_i]_N\}=\max \{0, (p-1)-q\sum_{i\in I} [\tilde\alpha_i]_N\}=
q.\max \{0, [\tilde\alpha_k]+[\tilde\alpha_{\lambda}]-N\}\]

Analogously, we can define the above notions for $\pm n^{\prime}$
which we represent by $u^{\prime}(1), u^{\prime}(2)$. The above
relations give that $u(1)=u^{\prime}(1)$, $u(2)=u^{\prime}(2)$.
This implies that for all $k,\lambda$, we have
$[\tilde\alpha_k]_N+[\tilde\alpha_{\lambda}]_N=[\tilde\alpha^{\prime}_k]_N+[\tilde\alpha^{\prime}_{\lambda}]_N$.
From this one gets that $[\tilde\alpha_i]_N=[\tilde\alpha^{\prime}_i]_N$ for every $i\in \{1,\cdots, s\}$. But
this implies that the two rows corresponding to $n$ and
$n^{\prime}$ are equal and in particular linearly dependent. This
is in contradiction with our assumptions by Remark~\ref{irreduciblecover}.
\end{proof}

\end{document}